\DeclareMathOperator{\Ad}{Ad}
\DeclareMathOperator{\ad}{ad}
\DeclareMathOperator{\Aut}{Aut}
\DeclareMathOperator{\Sym}{S}
\DeclareMathOperator{\Ric}{Ric}
\newcommand{\fr}{\mathfrak}
\newcommand{\al}{\alpha}
\newcommand{\be}{\beta}
\newcommand{\La}{\Lambda}
\newtheorem{theorem}{Theorem}
\newtheorem{prop}{Proposition}
\newtheorem{example}{Example}
\begin{document}
\title
{Invariant Einstein metrics  on  generalized flag manifolds with two isotropy summands}

\author{Andreas Arvanitoyeorgos and  Ioannis Chrysikos}

\address{University of Patras, Department of Mathematics, GR-26500 Rion, Greece}
\email{arvanito@math.upatras.gr}
\address{University of Patras, Department of Mathematics, GR-26500 Rion, Greece}
\email{xrysikos@master.math.upatras.gr}

\begin{abstract}
Let $M=G/K$ be a generalized flag manifold, that is the adjoint orbit of a  compact semisimple Lie group $G$.  We use the variational approach to find invariant Einstein metrics for all  flag  manifolds with two isotropy summands.  We also determine the nature of these Einstein metrics as critical points of the scalar curvature functional under fixed volume.

\medskip
\noindent
\thanks{The authors  were partially supported   
  by the C.~Carath\'{e}odory grant \#C.161 2007-10, University of
Patras.}

\medskip
\noindent 2000 {\it Mathematics Subject Classification.} Primary 53C25; Secondary 53C30, 22E46

\medskip
\noindent {\it Keywords}:  Einstein manifold, homogeneous space,  generalized flag manifold, isotropy representation, highest weight, Weyl's formula, bordered Hessian.

\end{abstract}


\maketitle

\section*{Introduction}
\markboth{Andreas Arvanitoyeorgos and  Ioannis Chrysikos}{Invariant Einstein metrics  on  generalized flag manifolds with two isotropy summands}

 A Riemannian metric $g$ on a manifold $M$ is called Einstein if the Ricci curvature is a constant multiple of the metric, i.e. ${\rm  Ric}_{g}=c\cdot g $,  for some $c\in\mathbb{R}$ ({\it Einstein equation}).  Einstein metrics form a special class of metrics on a given manifold $M$ (cf. \cite{Be}), and the existence question  is a fundamental problem in    Riemannian geometry.  The   Einstein equation is a non-linear second order  system of  partial differential equations and   general existence results are difficult to obtain. However, if the Riemannian manifold $(M, g)$ is compact, then an old result of Hilbert states that $g$ is an Einstein metric if and only if $g$ is a critical point of the scalar curvature functional $T : \mathcal{M}_{1}\to\mathbb{R}$ given by $T(g)=\int_{M}{\rm S}(g)d {\rm vol}_{g}$, on the set $\mathcal{M}_{1}$ of Riemannian metrics of unit volume.  This suggests a variational approach to finding Einstein metrics which, in the homogeneous case,  has lead  to several important existence and non-existence results mainly from the works of M. Wang, W. Ziller and  C. B\"ohm (\cite{Wa2}, \cite{Bom}, \cite{Bo}).  

    A usual strategy for constructing examples  of Einstein metrics  is to employ symmetry in order to reduce the Einstein equation into a more manageable system  of equations.   An important case  is when $M$ is a  homogeneous space, i.e.  when a Lie group  $G$ acts transitively on   $M$.  Many of the known examples of compact simply connected Einstein manifolds are homogeneous. 
    
     If $M=G/K$ is a homogeneous space with $G, K$   compact Lie groups, then  we can use  the variational approach to find Einstein metrics.  In this case  the $G$-invariant Einstein metrics on $M$ are precisely the critical points of $T$ restricted to $\mathcal{M}_{1}^{G}$, the set  of $G$-invariant metrics of volume 1. An alternative method is the direct computation of the Ricci curvature.   For both   cases,  since we are searching for $G$-invariant metrics on $M$,  the Einstein equation reduces to a system of non-linear algebraic equations which, in some  cases, can be solved explicity.  However a general  classification of all homogeneous  spaces that admit an Einstein metric, as well as the complete description of all $G$-invariant Einstein metrics on a given homogeneous Riemannian space $(M=G/K, g)$ is difficult.   For a detailed exposition on homogeneous Einstein manifolds we refer to Besse's book  \cite{Be}, and for more recent results  to the surveys \cite{LW}  and \cite{NRS}.

 An important class of homogeneous manifolds consists of the  adjoint  orbits of  compact connected semisimple Lie groups, also known as  {\it generalized flag manifolds}.  Let $G$ be a compact, connected and semisimple Lie group and let $\Ad : G\to\Aut(\fr{g})$ be the adjoint representation of $G$, where $\fr{g}$ denotes its Lie algebra.  A generalized flag manifold is a homogeneous space $M=G/K$ such that the isotropy group $K$ is the centralizer $C(S)$ of a torus $S$ in $G$.  This condition can be reformulated as follows: $M$ is the orbit of an element $\gamma_{o}\in\fr{g}$ under the action of the adjoint representation of $G$, i.e.  $M=\{{\rm Ad}(g)\gamma_{o} : g\in G\}\subset\fr{g}.$
   In fact, it can be shown that the stabilizer of this action $K=\{g\in G : {\rm Ad}(g) \gamma_{o}= \gamma_{o}\}$ is  the   centralizer  of  the torus $S_{\gamma_{o}} =\overline{\{{\rm exp}t\gamma_{o} : t\in\mathbb{R}\}}\subset G$  generated by the one-parameter subgroup ${\rm exp}t\gamma_{o}$ of $G$.   In particular, $K$ is connected  and the element $\gamma_{o}$ belongs to the center of the Lie algebra $\fr{k}$ of $K$  (cf. \cite{Be}).   If $S_{\gamma_{o}}=T$ is a maximal torus in $G$, then $K=C(S_{\gamma_{o}})=T$ and $M=G/T$  is called a {\it full flag manifold}.    Generalized flag manifolds have been classified in \cite{Bor} using the notion of the {\it painted Dynkin diagrams}. There is an infinite family for each of the classical Lie groups, and a finite number for each of the exceptional Lie groups (see also \cite{AA}).
  
Generalized flag manifolds have a   rich complex geometry.  It is known that they admit a finite number of invariant complex structures,  and that there is a one-to-one correspondence between invariant complex structures (up to a sign) and invariant K\"ahler-Einstein metrics (up to a scale) (cf. \cite{AP}, \cite{Bor}). The problem of finding (non K\"ahler) Einstein metrics on generalized flag manifolds has been first studied by D. V. Alekseevsky in \cite{Ale1}. For some of these spaces the standard metric is Einstein since they  appear in the work of M. Wang and W. Ziller (\cite{Wa1}), where they classified all normal homogeneous Einstein manifolds. In \cite{Kim} M. Kimura using the variational method of \cite{Wa2}   found all $G$-invariant Einstein metrics for all flag manifolds  for which the isotropy representation  decomposes into three inequivalent irreducible summands. In \cite{Sak}  Y. Sakane gave an explicit expression for the Ricci tensor of full flag manifolds of classical Lie groups, and   by use of   Gr\"obner bases theory he proved the excistence of invariant non K\"ahler-Einstein metrics  on certain full flag manifolds.  Finally, in  \cite{Arv}  the first author  found new $G$-invariant Einstein metrics on certain generalized flag manifolds with four  inequivalent irreducible summands, by using a Lie theoretic description   of  the Ricci tensor.

 In the present article we study generalized flag manifolds for which the isotropy representation decomposes into two inequivalent irreducible submodules.  Any such space admits a unique $G$-invariant complex structure (\cite{N}) and thus a unique K\"ahler-Einstein metric.   The  authors  classified  these spaces in  a recent  paper \cite{Chr} and proved that any such flag manifold is localy isomorphic to one of the spaces  listed in   Table 1.
\medskip
 \begin{center}
{\sc{Table 1.}} The generalized flag manifolds with two isotropy summands.  
  \end{center}  
  \smallskip
\begin{center}
$
   \begin{tabular}{|l|}
   \hline
 $B(\ell, m)=SO(2\ell +1)/U(\ell-m)\times SO(2m+1)$ \ $(\ell>0, m\geq 0, \ell-m\neq 1)$ \\
  $C(\ell, m)=Sp(\ell)/U(\ell-m)\times Sp(m)$ \ $(\ell>0, m>0)$\\
 $D(\ell, m)=SO(2\ell)/U(\ell-m)\times SO(2m)$\ $(\ell>0, m>0, \ell-m\neq 1)$ \\
    $G_{2}/U(2)$ \   ($U(2)$ is represented by the short root of $G_{2}$) \\
  $ F_{4}/SO(7)\times U(1)$ \\
 $F_{4}/Sp(3)\times U(1)$ \\
  $E_{6}/SU(6)\times U(1)$\\
     $E_{6}/SU(2)\times SU(5)\times U(1)$\\
  $E_{7}/SU(7)\times U(1)$\\
   $E_{7}/ SU(2)\times SO(10)\times U(1)$\\
   $E_{7}/ SO(12)\times U(1)$\\
  $E_{8}/E_{7}\times U(1)$\\
  $E_{8}/SO(14)\times U(1)$\\
  \hline
   \end{tabular}
   $
   \end{center}
  \bigskip
  
    In a recent work \cite{Ker},  W. Dickinson and M. Kerr, classified all  simply connencted homogeneous spaces $M=G/H$, where $G$  is a simple Lie group, $H$ is a connected and closed subgroup,  and the isotropy representation  decomposes into   two irreducible summands.  By using Theorem (3.1) of \cite{Wa2}  they counted the number of Einstein metrics.  In the present paper  by use of the variational method we find explicity the Einstein metrics for the flag manifolds presented in Table 1.  Next, by using the {\it bordered Hessian}  we examine the nature of these critical points (minima  or maxima).   After the present work has been completed, the authors were informed by Y. Sakane that solutions of Einstein equation have also been obtained in an unpublished work of  I. Ohmura   \cite{Om},  using the method of Riemannian submersions (cf. \cite{Be}).  
  
The paper is organized as follows: In Section 1 we recall some  facts about compact homogeneous spaces.  In Section 2 we study the structure of a generalized flag manifold $M=G/K$ of a compact semisimple Lie group $G$.  In Section 3 we use representation theory to compute the dimensions of the irreducible submodules of the isotropy representation corresponding to the flag manifolds presented in Table 1.  In Section 4, we solve the Einstein equation  by using the variational approach of \cite{Wa2} and prove the following:

 \medskip 
 { \sc{Theorem A.}}
{\it Let $M=G/K$ be  a generalized  flag manifold with two isotropy summands. Then $M$ admits  precisely two  $G$-invariant Einstein metrics.  One is K\"ahler and the other one is non-Kahler.  These  Einstein metrics are given  explicity in Theorem 2.}   
 
  \medskip
From the above theorem we exclude the following Hermitian symmetric spaces for which the standard metric is the unique (up to a scalar) $G$-invariant Einstein metric.
\smallskip
  \begin{center}
{\sc{ Table 2.}} Exceptions of the classification.  
\end{center}
\smallskip
\begin{center}
$ \begin{tabular}{|l|l|l|}
    \hline 
     Space & Case & Hermitian Symmetric Space \\
     \hline  
       $ B(\ell, m)$   &  $m=\ell-1$ &  $SO(2\ell+1)/U(1)\times SO(2\ell-1)$ \\
   $C(\ell, m)$ & $m=0$ & $Sp(\ell)/U(\ell)$ \\
   $B(\ell, m)$ & $m=\ell-1$ & $SO(2\ell)/U(1)\times SO(2\ell-2)$\\
      & $m=0$ & $SO(2\ell)/U(\ell)$\\
    \hline
   \end{tabular}$
   \end{center}
\medskip

 In Section 5 we compute the bordered Hessian of the scalar curvature functional with the constraint condition of volume 1 and characterize the nature of the  solutions   obtained in Theorem A.  In particular we show the following:

 \medskip 
 {\sc{Theorem B.}}
{\it Let $M=G/K$ be  a generalized  flag manifold with two isotropy summands. Then  the two $G$-invariant Einstein metrics on $M$  given in Theorem A, are both local minima   of the scalar curvature functional on the space $\mathcal{M}^{G}_{1}$. }

\section*{Acknowlegments}  
The second author wishes to thank Professor Yusuke Sakane for several useful discussions during his visit at the University of Patras.

\markboth{Andreas Arvanitoyeorgos and Ioannis Chrysikos}{Invariant Einstein metrics  on  generalized flag manifolds with two isotropy summands}
\section{Preliminaries}
\markboth{Andreas Arvanitoyeorgos and Ioannis Chrysikos}{Invariant Einstein metrics  on  generalized flag manifolds with two isotropy summands}

A Riemannian manifold $(M, g)$ is $G$-homogeneous if there is a closed subgroup $G$ of ${\rm Isom}(M, g)$ such that for any $p, q\in M$, there exists $g\in G$ such that $gp=q$.  Let $K=\{g\in G : gp=p\}$ be the isotropy subgroup corresponding to $p$.  Note that $K$ is compact since $K\subset O(T_{p}M)$, where $T_{p}M$ is the tangent space of $M$ at $p$.  Via the map $g\mapsto gp$ we identify the manifolds $M\cong G/K$.

 Let $M=G/K$ be a homogeneous space, where $G$ is a compact, connected and semisimple Lie group and $K$ is a closed subgroup of $G$.  Let $o=eK$  be the identity coset of $G/K$.  Several geometrical questions about  $M$  can be reformulated in terms of the pair $(G, K)$ and then in terms of the corresponding Lie algebras $(\fr{g}, \fr{k})$. In fact, there exists a one-to-one correspondence between $G$-invariant tensor fields of type $(p, q)$ on $M$  and tensors of the same type on the tangent space $T_{o}M$   which are invariant under the isotropy representation $\chi : K\to \Aut(T_{o}M)$ of $K$ on $T_{o}M$ (cf. \cite{Kob}).  For instance, left-invariant metrics on a Lie group are determined by an inner product on its Lie algebra, and   $G$-invariant Riemannian metrics $g$ on   $M=G/K$ are determined by an inner product on $\fr{g}/\fr{k}\cong T_{o}(M)$, with the additional requirement that the inner product is $\Ad(K)$-invariant.   
 
 Since the Lie group $G$ is semisimple and compact, the $\Ad(K)$-invariant Killing form $B(X, Y)={\rm tr}(\ad(X)\circ\ad(Y))$ of $\fr{g}$  is non-degenerate and negative definite.  Let $\fr{g}=\fr{k}\oplus\fr{m}$ be the orthogonal decomposition of $\fr{g}$ with respect to $-B$.    This is a reductive decomposition of $\fr{g}$, that is $\Ad(K)\fr{m}\subset\fr{m}$, and    the tangent space $T_{o}M$ is identified with $\fr{m}$.  Then $\Ad^{G}\big|_{K}=\Ad^{K}\oplus\chi$, where  $\Ad^{G}$ and $\Ad^{K}$ are the adjoint representations of $G$ and $K$ respectively.  It follows that the isotropy representation $\chi$   is equivalent to the adjoint representation   of $K$ restricted on $\fr{m}$, i.e.  $\chi(K) = {\rm Ad}^{K}\big|_{\fr{m}}$.  Therefore, a $G$-invariant metric on $G/K$ is determined by an ${\rm Ad}(K)$-invariant inner product $\langle \cdot \ , \cdot \rangle$ on $\fr{m}$. 
   
 Let $Q( . \  , \ .)$ be an $\Ad(K)$-invariant inner product on $\fr{m}$.   Consider the following $Q$-orthogonal $\Ad(K)$-invariant decomposition of $\fr{m}$ into its $\Ad(K)$-irreducible submodules, that is 
 \begin{equation}
 \fr{m}=\fr{m}_{1}\oplus\cdots\oplus\fr{m}_{q}.
 \end{equation}
 By using (1) we can parametrize the space of $G$-invariant metrics on $M$ so that any $G$-invariant metric $g$ on $M=G/K$ is determined by an inner product on $\fr{m}$ of the form 
 \begin{equation}
 \left\langle   \ , \ \right\rangle =x_{1}Q|_{\fr{m}_{1}}+\cdots+x_{q}Q|_{\fr{m}_{q}}, 
\end{equation}
 where $x_{i}>0$ for all $i$.
  Such a metric is called {\it diagonal}  since $\left\langle \ , \ \right\rangle$ is diagonal  with respect to $Q$.  If $\fr{m}_{i}$ and $\fr{m}_{j}$ are pairwise inequivalent representations, then the decomposition (1) is unique up to order. But if the modules $\fr{m}_{i}, \fr{m}_{j}$  are equivalent for some $i$ and $j$, then $\left\langle \fr{m}_{i}, \fr{m}_{j} \right\rangle$ does not necessarily vanish. For the examples in the present work we always have $\fr{m}_{i}\ncong\fr{m}_{j}$  for $i\neq j$ (as $\Ad(K)$-submodules). Also,  the dimensions $d_{i}=\dim{ \fr{m}_{i}}$ are independent of the chosen decomposition.
  
  Since $G$ is compact and $K$ is a closed subgroup of $G$ the homogeneous Riemannian manifold $(M=G/K, g)$ is compact.  Let   $S(g)$ denotes the scalar curvature of the metric $g$.   By a theorem of Bochner \cite{Boc} $S(g)$ in non-negative, and is zero if and only if the metric is flat \cite{All}.  Thus we are interested only in homogeneous Einstein metrics with positive scalar curvature, which is equivalent to $c>0$, where $\Ric_{g}=c\cdot g$. The Einstein metrics are the critical points of the total scalar curvature functional
   \[
   T(g)=\int_{M}S(g)d{\rm vol}_{g}
  \]
 on the space $\mathcal{M}_{1}$   of Riemannian  metrics of volume one.  Recall that the space $\mathcal{M}_{1}$ has a natural Riemannian metric, the $L^{2}$ metric, which is given by $ \left\|h\right\|_{g}^{2}=\int_{M}g(h, h)d{\rm vol}_{g}$, where $h$ is a symmetric 2-tensor (considered as a tangent vector at the metric $g$),  and $d{\rm vol}_{g}$ is the volume element of $g$.  Let $\mathcal{M}_{1}^{G}\subset\mathcal{M}_{1}$ denote the set of all $G$-invariant metrics of volume one on $M$, equipped with the restriction of the $L^2$ metric of $\mathcal{M}_{1}$.  This is also a Riemannian manifold, and since the isotropy representation of $M=G/K$ consists of pairwise inequivalent irreducible representations, $(\mathcal{M}_{1}^{G}, L^2)$ is flat with dimension equal to the number of irreducible summands (cf. \cite{Bom}, p.693).  Notice that on $\mathcal{M}_{1}^{G}$ we have $T(g)=S(g)$.  The  critical points of the restriction $S\big|_{\mathcal{M}_{1}^{G}} : {\mathcal{M}_{1}^{G}}\to \mathbb{R}$ are precisely the $G$-invariant Einstein metrics of volume one (cf. \cite{Be}, p.121).  In Section 4 we will use this variational approach to find Einstein metrics.
    
  For a fixed $Q$-orthogonal $\Ad(K)$-invariant decomposition  (1), the scalar curvature of the metric $(2)$ has a particularly simple expression, as shown in \cite{Wa2}.   Let $\{X_{\al}\}$ be a $Q$-orthogonal basis adapted to the decomposition of $\fr{g}$, i.e.  $X_{\al}\in \fr{m}_{i}$ for some $i$, and $\al<\be$ if $i<j$ with $X_{\al}\in \fr{m}_{i}$ and $X_{\be}\in\fr{m}_{j}$.  Set $A_{\al\be}^{\gamma}=Q([X_{\al}, X_{\be}], X_{\gamma})$ so that $[X_{\al}, X_{\be}]=\sum_{\gamma}A_{\al\be}^{\gamma}X_{\gamma}$, and   $[ijk]=\sum(A_{\al\be}^{\gamma})^{2}$, where the sum is taken over all indices $\al, \be, \gamma$ with $X_{\al}\in \fr{m}_{i}, X_{\be}\in\fr{m}_{j}$ and $X_{\gamma}\in\fr{m}_{k}$.   Notice that $[ijk]$ is indepedent of the $Q$-orthogonal bases chosen for $\fr{m}_{i}, \fr{m}_{j}$ and $\fr{m}_{k}$,  but it depends on the choise of the decomposition of $\fr{m}$.  Also, $[ijk]$ is nonnegative and symmetric in all three entries.   
  The set $\{X_{\al}/\sqrt{x_{i}} : X_{\al}\in\fr{m}_{i}\}$ is a $\left\langle  \ , \ \right\rangle$-orthogonal basis of $\fr{m}$. Then the scalar curvature of $\left\langle  \ , \ \right\rangle$ is given by 
  \begin{equation}
  S=\frac{1}{2}\sum_{i=1}^{q}\frac{d_{i}b_{i}}{x_{i}}-\frac{1}{4}\sum_{i, j, k}[ijk]\frac{x_{k}}{x_{i}x_{j}},
  \end{equation}
  where $d_{i}=\dim{\fr{m}_{i}}$, and $b_{i}$ is defined by $-B\big|_{\fr{m}_i}=b_{i}Q\big|_{\fr{m}_{i}}$  for all $i=1, \ldots, q$.

 \markboth{Andreas Arvanitoyeorgos and Ioannis Chrysikos}{Invariant Einstein metrics  on  generalized flag manifolds with two isotropy summands}
\section{The structure of flag manifolds}
\markboth{Andreas Arvanitoyeorgos and Ioannis Chrysikos}{Invariant Einstein metrics  on  generalized flag manifolds with two isotropy summands}

Let $G$ be a compact connected semisimple Lie group. We denote by $\fr{g}$ the corresponding Lie algebra and by $\fr{g}^{\mathbb{C}}$ its complexification. We choose a maximal torus  $T$ in $G$, and let $\fr{h}$ be the Lie algebra of $T$.  The complexification   $\fr{h}^{\mathbb{C}}$    is a Cartan subalgebra of $\fr{g}^{\mathbb{C}}$.  We denote by $R\subset(\fr{h}^{\mathbb{C}})^*$ the root system of $\fr{g}^{\mathbb{C}}$ relative to $\fr{h}^{\mathbb{C}}$,  and we consider the root space decomposition
\[
\fr{g}^{\mathbb{C}}=\fr{h}^{\mathbb{C}}\oplus\sum_{\al\in R}\fr{g}_{\al}^{\mathbb{C}},
\]
where by $\fr{g}_{\al}^{\mathbb{C}}=\mathbb{C}E_{\al}$ we denote  the 1-dimensional root spaces.

     Let $\Pi=\{\al_{1}, \ldots, \al_{\ell}\}$ \ $(\dim\fr{h}^{\mathbb{C}}=\ell)$ be a fundamental system of $R$.  We fix a lexicographic ordering on $(\fr{h}^{\mathbb{C}})^*$  and we denote by $R^{+}$ the set of positive roots.  It  is well known that for any $\al\in R$ we can choose root vectors $E_{\al}\in\fr{g}_{\al}^{\mathbb{C}}$ such that $B(E_{\al}, E_{-\al})=-1$ and $[E_{\al}, E_{-\al}]=-H_{\al}$, where    $H_{\al}\in\fr{h}^{\mathbb{C}}$ is determined by the equation $B(H, H_{\al})=\al(H)$, for all $H\in\fr{h}^{\mathbb{C}}$.  By using the last equation we  obtain a  natural isomorphism  between $\fr{h}^{\mathbb{C}}$ and the dual space $(\fr{h}^{\mathbb{C}})^{*}$.  The normalized root vectors $E_{\al}$   satisfy the relation
\[
 [E_{\al}, E_{\be}]=
 \left\{
\begin{array}{ll}
  N_{\al, \be}E_{\al+\be},  & \mbox{if} \ \ \al, \be,\al+\be\in R \\
  0, & \mbox{if} \ \ \al, \be\in R,  \al+\be\notin R
\end{array} \right.
\]
where $ N_{\al, \be}=N_{-\al, -\be}\in \mathbb{R}$ $(\al, \be\in R)$.
Then we obtain that (cf. \cite{Hel})
\[
\fr{g}=\fr{h}\oplus \sum_{\al\in R^{+}}(\mathbb{R}A_{\al}+\mathbb{R}B_{\al}),
\]
where $A_{\al}=E_{\al}+E_{-\al}, B_{\al}=\sqrt{-1}(E_{\al}-E_{-\al}), \al\in R^{+}$.  The complex conjugation $\tau$ on $\fr{g}^{\mathbb{C}}$ with respect to the compact real form $\fr{g}$ satisfies the relations $\tau(E_{\al})=E_{-\al}$ and $\tau(E_{-\al})=E_{\al}$.

We now assume that $G$ is simple. Let $\Pi_{K}$ be a subset of $\Pi$ and set
\[
\Pi_{M}=\Pi\backslash \Pi_{K}=\{\al_{i_{1}}, \ldots, \al_{i_{m}}\}, \qquad (1\leq i_{1}\leq \cdots\leq i_{m}\leq \ell).
\]
Let
\begin{equation}
R_{K}=R\cap\left\langle\Pi_{K} \right\rangle, \quad R_{K}^{+}=R^{+}\cap\left\langle\Pi_{K}\right\rangle, \quad R_{M}^{+}=R^{+}\backslash R_{K}^{+},
\end{equation}
where   $\left\langle\Pi_{K}\right\rangle$   denotes the set of roots generated by $\Pi_{K}$.   Then
\begin{equation}
\fr{p}=\fr{h}^{\mathbb{C}}\oplus\sum_{\al\in R_{K}}\fr{g}_{\al}^{\mathbb{C}}\oplus\sum_{\al\in R_{M}^{+}}\fr{g}^{\mathbb{C}}_{\al}
\end{equation}
is a parabolic subalgebra of $\fr{g}^{\mathbb{C}}$ (cf. \cite{Ale2}).

Let $G^{\mathbb{C}}$ be the simply connected complex simple Lie group whose Lie algebra is $\fr{g}^{\mathbb{C}}$ and $P$ the parabolic subgroup of $G^{\mathbb{C}}$ generated by $\fr{p}$. The   homogeneous space $G^{\mathbb{C}}/P$ is called {\it generalized flag manifold} (or {\it K\"ahler $C$-space}) and is a compact, simply connected complex manifold on which $G$ acts transitively.  Note that $K=G\cap P$ is a connected and closed subgroup of $G$.  The canonical embedding $G\to G^{\mathbb{C}}$ gives a diffeomorphism of a compact homogeneous space $M=G/K$ to a simply connected complex homogeneous space $G^{\mathbb{C}}/P$, i.e.  $G^{\mathbb{C}}/P\cong G/K$ and $M$ admits a $G$-invariant K\"ahler metric (cf. \cite{B}).  The intersection $\fr{k}=\fr{p}\cap\fr{g}\subset\fr{g}$    is the Lie subalgebra corresponding to $K$, given by $\fr{k}=\fr{h}\oplus\sum_{\al\in R_{K}^{+}}(\mathbb{R}A_{\al}+\mathbb{R}B_{\al})$.  By using (5)  we easily obtain the direct decomposition $\fr{p}=\fr{k}^{\mathbb{C}}\oplus\fr{n}$, where   $\fr{k}^{\mathbb{C}}$ is the complexification of $\fr{k}$ and $\fr{n}=\sum_{\al\in R_{M}^{+}}\fr{g}_{\al}^{\mathbb{C}}$ is the nilradical of $\fr{p}$.  

Let $\fr{m}$ be the linear subspace of $\fr{g}$ defined as:
\[
\fr{m}=\sum_{\al\in R_{M}^{+}}(\mathbb{R}A_{\al}+\mathbb{R}B_{\al}).
\]
Then  with respect to the Killing form $B$ we obtain the   reductive decomposition $\fr{g}=\fr{k}\oplus\fr{m}$ of $\fr{g}$ with $[\fr{k}, \fr{m}]\subset\fr{m}$.     
We define a complex structure $J$ on $\fr{m}\cong T_{o}M$  by
\[
JA_{\al}=B_{\al}, \quad JB_{\al}=-A_{\al} \qquad \ (\al\in R_{M}^{+}).
\]
This gives a $G$-invariant complex structure on $M=G/K$ and coincides with the canonical structure induced from the complex homogeneous space $G^{\mathbb{C}}/P$.

In the following we assume that  $\Pi_{K}=\Pi-\{\al_{i_{o}}\}$, that is $\Pi_{M}=\{\al_{i_{o}}\}$.  For a non-negative integer $n$, we set
\[
R^{+}(\al_{i}, n)=\big\{\al\in  R^{+} : \al=\sum_{i=1}^{\ell} m_{j}\al_{j}\in R^{+}, m_{i_{o}}=n\big\}, 
\]
and define  $\Ad(K)$-invariant subspaces $\fr{m}_{n}$ of $\fr{g}$ by $\fr{m}_{n}=\sum_{\al\in R^{+}(\al_{i}, n)}(\mathbb{R}A_{\al}+\mathbb{R}B_{\al})$.  Put $q={\rm max}\big\{m_{i_{o}} : \al=\sum_{j=1}^{\ell}m_{j}\al_{j}\in R^{+}\big\}.$  Then we obtain the decomposition  
\begin{equation}
\fr{m}=\sum_{n=1}^{q}\fr{m}_{n},  
\end{equation}
  and $R_{M}^{+}=\bigcup_{n=1}^{q}R^{+}(\al_{i}, n)$  (cf. \cite{It}).  We set $\fr{m}_{0}=\fr{k}$. Then for $n, m\in\{1, \ldots, q\}$ the following are true:
\begin{equation}
[\fr{k}, \fr{m}_{n}]\subset \fr{m}_{n}, \quad [\fr{m}_{n}, \fr{m}_{m}]\subset \fr{m}_{n+m}+\fr{m}_{|n-m|}, \quad  [\fr{m}_{n}, \fr{m}_{n}]\subset \fr{k}\oplus\fr{m}_{2n}.
\end{equation}
 Note that $\fr{m}_{n}$ are irreducible as $\Ad(K)$-modules and are inequivalent to each other.  Thus  (6) defines an irreducible decomposition of $\fr{m}$ and according to (2) the space of  $G$-invariant Riemannian  metrics on $M=G/K$ is given by
\[
 \Big\{x_{1}(-B)\big|_{\fr{m}_{1}}+\cdots+x_{q}(-B)\big|_{\fr{m}_{q}} : x_{1}>0, \ldots, x_{q}>0\Big\}.
 \]
    The following theorem describes the K\"ahler-Einstein metrics on the flag manifold $M=G/K$.  
 \begin{theorem}\cite{B}
 Let $M=G^{\mathbb{C}}/P=G/K$ be a generalized flag manifold and let  $B$ be the Killing form of $\fr{g}$. We assume that $\fr{g}$ admits a reductive decomposition given by (6).
  Then $M$ admits a $G$-invariant K\"ahler-Einstein metric defined  by
\[
g(\sum_{n=1}^{q}X_{n}, \sum_{n=1}^{q}Y_{n})= \sum_{n=1}^{q}n(-B(X_{n}, Y_{n})),   \qquad   (X_{n}, Y_{n}\in\fr{m}_{n}). 
\]  
  \end{theorem}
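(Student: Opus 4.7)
The plan is to verify in turn that the metric $g$ of the statement is (a) a well-defined $G$-invariant Riemannian metric on $M = G/K$, (b) Hermitian with respect to the invariant complex structure $J$, (c) K\"ahler, and (d) Einstein. Steps (a) and (b) are routine: each $-B|_{\fr{m}_n}$ is an $\Ad(K)$-invariant positive-definite inner product, positive linear combinations preserve both properties, and $J$ exchanges $A_\alpha \leftrightarrow \pm B_\alpha$ within each $\fr{m}_n$, with both vectors having equal $-B$-norm, so $g(J\cdot, J\cdot) = g(\cdot, \cdot)$.

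For step (c), the key observation is that the standing assumption $\Pi_M = \{\al_{i_o}\}$ forces the center $Z(\fr{k})$ to be one-dimensional, spanned by $Z_0 := \sqrt{-1}\, H_0$, where $H_0 \in \fr{h}_{\mathbb{R}}$ is characterized by $\al_j(H_0) = \delta_{j, i_o}$; in particular $\al(H_0) = n$ for every $\al \in R^+(\al_{i}, n)$. I would then show that the fundamental two-form $\omega_g(X,Y) := g(JX,Y)$ at $o$ coincides with the invariant two-form $\tilde\omega(X,Y) := -B(Z_0, [X,Y])$. Agreement is verified on generator pairs drawn from $\{A_\al, B_\al\}_{\al \in R_M^+}$; the only nontrivial case is $(A_\al, B_\al)$ for $\al \in R^+(\al_i, n)$, in which both sides evaluate to $2n$ using $[E_\al, E_{-\al}] = -H_\al$, $[A_\al, B_\al] = 2\sqrt{-1}\, H_\al$, and $-B(B_\al, B_\al) = 2$, while all other off-diagonal pairs give zero on both sides because distinct roots are $B$-orthogonal and $Z_0 \in \fr{h}^{\mathbb{C}}$ is $B$-orthogonal to every root vector. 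Closedness of $\omega_g = \tilde\omega$ then follows from the Jacobi identity: the standard formula for the exterior derivative of an invariant two-form yields
\[
d\omega_g(X, Y, Z) = B\bigl(Z_0,\, [[X, Y]_{\fr{m}}, Z] + [[Y, Z]_{\fr{m}}, X] + [[Z, X]_{\fr{m}}, Y]\bigr),
\]
and $\ad$-invariance of $B$ combined with the centrality $[Z_0, \fr{k}] = 0$ gives $B(Z_0, [[U,V]_{\fr{k}}, W]) = 0$, so each $[\cdot, \cdot]_{\fr{m}}$ may be replaced by the full bracket; the resulting expression vanishes by the Jacobi identity.

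For step (d), the Ricci form $\rho_g$ of the K\"ahler metric $g$ is itself a $G$-invariant closed real $(1,1)$-form on $M$. The same construction $Z \mapsto \omega_Z$ used in (c) embeds $Z(\fr{k})$ into the space of such forms, and since $Z(\fr{k})$ is one-dimensional in our setting, one concludes $\rho_g = c\, \omega_g$ for some $c \in \mathbb{R}$; this is exactly the Einstein equation $\Ric_g = c\, g$. Positivity of $c$ follows from the Fano property of generalized flag manifolds (their anticanonical bundle is ample). The main obstacle I anticipate is the verification in step (c): one must carefully track root-space normalizations to identify $\omega_g$ with $\tilde\omega$ on every generator pair, and justify the $\ad$-invariance manipulation that licenses the passage from $[X,Y]_{\fr{m}}$ to $[X,Y]$ in the exterior derivative formula. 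Once (c) is in place, the Einstein condition is essentially automatic from the one-dimensionality of $Z(\fr{k})$.
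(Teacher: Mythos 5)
The paper offers no proof of this statement: Theorem 1 is quoted from Borel--Hirzebruch \cite{B} (see also \cite{Be}, Ch.~8) without argument, so there is nothing in the text to compare yours against. What you have written is essentially the classical proof of that cited result, and the computations you flag as the main obstacle do close up under the paper's normalizations: with $B(E_\al,E_{-\al})=-1$, $[E_\al,E_{-\al}]=-H_\al$ and $B(H,H_\al)=\al(H)$ one gets $-B(B_\al,B_\al)=2$ and $[A_\al,B_\al]=2\sqrt{-1}\,H_\al$, whence $\omega_g(A_\al,B_\al)=2n=\tilde\omega(A_\al,B_\al)$ for $\al\in R^{+}(\al_i,n)$, while every other generator pair vanishes on both sides because distinct root spaces are $B$-orthogonal and $B(\fr{h}^{\mathbb{C}},\fr{g}^{\mathbb{C}}_{\gamma})=0$ for $\gamma\neq 0$; the replacement of $[\cdot,\cdot]_{\fr m}$ by the full bracket in the Koszul formula is justified exactly as you say, since $B(Z_0,[[U,V]_{\fr k},W])=-B([Z_0,[U,V]_{\fr k}],W)=0$.

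Two points should be made explicit to make the sketch airtight. First, step (d) needs not only that $Z\mapsto\omega_Z$ embeds $\fr z(\fr k)$ into the closed invariant two-forms but that it is \emph{onto} them; otherwise $\rho_g$ could a priori be a closed invariant two-form not of this type. This follows because invariant forms compute $H^{*}(M;\mathbb{R})$ for compact $G$, there are no nonzero invariant one-forms (no $\fr m_n$ contains a trivial $\Ad(K)$-submodule), so the space of closed invariant two-forms injects into $H^{2}(M;\mathbb{R})$, which is one-dimensional precisely when $\Pi_M=\{\al_{i_o}\}$ as assumed in decomposition (6). Second, the inference from ``Hermitian with closed fundamental form'' to ``K\"ahler'' uses integrability of $J$, which you should take from the identification of $J$ with the canonical complex structure of $G^{\mathbb{C}}/P$ recorded in Section 2 rather than reprove. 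With these two remarks supplied, your argument is complete and correct, and positivity of the Einstein constant also follows without invoking the Fano property, since a compact homogeneous space with nonpositive Einstein constant would be Ricci-flat and hence flat, which a flag manifold is not.
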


Let   $\fr{m}^{\mathbb{C}}$  and  $\fr{m}_{n}^{\mathbb{C}}$ be the complexifications of   $\fr{m}$ and $\fr{m}_{n}$ respectively.  These are complex linear subspaces of $\fr{g}^{\mathbb{C}}$ and we have that $\fr{m}^{\mathbb{C}}=\sum_{\al\in R_{M}^{+}}(\mathbb{C}E_{\al}+\mathbb{C}E_{-\al})$  and
$\fr{m}^{\mathbb{C}}_{n}=\fr{m}_{n}^{+}\oplus\fr{m}_{n}^{-}$, where $\fr{m}^{\pm}_{n}=\sum_{\al\in R^{+}(\al_{i}, n)}\mathbb{C}E_{\pm\al}$. Also  $\fr{k}^{\mathbb{C}}=\fr{h}^{\mathbb{C}}\oplus\sum_{\al\in R_{K}^{+}}(\mathbb{C}E_{\al}+\mathbb{C}E_{-\al}),$ and this is a complex reductive Lie algebra.  Thus we obtain the decomposition $\fr{k}^{\mathbb{C}}=
\fr{k}^{\mathbb{C}}_{s}\oplus \fr{z}(\fr{k}^{\mathbb{C}})$, where $\fr{k}^{\mathbb{C}}_{s}=[\fr{k}^{\mathbb{C}}, \fr{k}^{\mathbb{C}}]$ denotes the semisimple part of $\fr{k}^{\mathbb{C}}$, and $\fr{z}(\fr{k}^{\mathbb{C}})$ its center.  By \cite{Kim}, it is known that each 
  $\fr{m}^{\pm}_{n}$ is a complex irreducible $\ad_{\fr{g}^{\mathbb{C}}}(\fr{k}^{\mathbb{C}}_{s})$-submodule of $\fr{k}^{\mathbb{C}}_{s}$, where by $\ad_{\fr{g}^{\mathbb{C}}}$ we denote the adjoint representation of $\fr{g}^{\mathbb{C}}$.  This fact is equivalent with the irreducibility of the real $\ad_{\fr{g}}(\fr{k})$-submodules $\fr{m}_{n}$ of $\fr{k}$.
\medskip

\markboth{Andreas Arvanitoyeorgos and Ioannis Chrysikos}{Invariant Einstein metrics  on  generalized flag manifolds with two isotropy summands}
\section{Irreducible submodules}
\markboth{Andreas Arvanitoyeorgos and Ioannis Chrysikos}{Invariant Einstein metrics  on  generalized flag manifolds with two isotropy summands}

  \subsection{ Flag manifolds with two isotropy summands }
   
   In this paper we are interested in flag manifolds  $M=G/K$ for which the isotropy representation  $\chi : K\to\Aut(\fr{m})$ decomposes into exactly two real irreducible  inequivalent submodules, that is $\fr{m}=\fr{m}_{1}\oplus\fr{m}_{2}$.  In order to obtain  such flag manifolds  it is sufficient to set $R^{+}(\al_{i}, n)=0$ for $n\geq 3$.  
   
    Let $\mu=\sum_{i=1}^{\ell}m_{i}\al_{i}$ be the {\it highest root} of $R$, that is the unique  root such that for any other   root $\al=\sum_{i=1}^{\ell}c_{i}\al_{i}$  we have $c_{i}\leq m_{i}$  $(i=1, \ldots, \ell)$.  The positive coefficients   $m_{i}\in\mathbb{Z}$ are called {\it heights} of the simple roots $\al_{i}$.    In \cite{Chr} the authors   proved   that generalized flag manifolds with $\fr{m}=\fr{m}_{1}\oplus\fr{m}_{2}$ are in one-to-one correspondence with the sets $\Pi_{K}=\Pi-\{\al_{i_{o}}\}$ such that  the simple root $\al_{i_{o}}$ has height two, that is $m_{i_{o}}=2$.   We can present all these suitable pairs $(\Pi, \Pi_{K})$  graphically, by painting black the  simple root $\al_{i_{o}}$ in the Dynkin diagram of $G$.  The subdiagram of white roots determines the semisimple part of the Lie algebra of $K$.  These diagrams are given in Table 3.  
\newpage
  
 \begin{center}
{\sc{Table 3.}} Painted Dynkin diagrams of $M=G/K$ such that $\fr{m}=\fr{m}_1\oplus\fr{m}_2$   
\end{center}
 \smallskip
\begin{center}
\begin{tabular}{|c|c|c|c|}
\hline
\begin{picture}(20,30)(0,0)
\put(10, 12){\makebox(0,0){$G$}}\end{picture}
 &  \begin{picture}(30,30)(0,0)\put(10, 12){
\makebox(0,0){$( \Pi, \Pi^{}_K ) $}}\end{picture}
& \begin{picture}(30,30)(0,0)\put(15, 12){
\makebox(0,0){$K$}}\end{picture}
 &\begin{picture}(30,30)(0,0)\put(15, 12){
\makebox(0,0){\shortstack{$\dim {\frak m}^{}_1$ \\ \\
 $\dim {\frak m}^{}_2$}}}\end{picture}  
\\ 
\hline 

\begin{picture}(15,45)(0,0)
\put(10, 25){\makebox(0,0){$B^{}_\ell$}}\end{picture}
 &
\begin{picture}(160,45)(-15,-27)
\put(0, 0){\circle{4}}
\put(0,10){\makebox(0,0){1}}
\put(2, 0){\line(1,0){14}}
\put(18, 0){\circle{4}}
\put(20, 0){\line(1,0){10}}
\put(18,10){\makebox(0,0){2}}
\put(40, 0){\makebox(0,0){$\ldots$}}
\put(50, 0){\line(1,0){10}}
\put(60, -14){\makebox(0,0){$( 2 \leq p \leq \ell)$}}
\put(60, 0){\circle*{4.4}}
\put(60, 10){\makebox(0,0){$p$}}
\put(60, 0){\line(1,0){10}}
\put(80, 0){\makebox(0,0){$\ldots$}}
\put(90, 0){\line(1,0){10}}
\put(102, 0){\circle{4}}
\put(102, 10){\makebox(0,0){$\ell-1$}}
\put(103.5, 1.3){\line(1,0){15.5}}
\put(103.5, -1.3){\line(1,0){15.5}}
\put(115.5, -2.25){\scriptsize $>$}
\put(123.5, 0){\circle{4}}
\put(124, 10){\makebox(0,0){$\ell$}}
\end{picture}
 & 
\begin{picture}(110,45)(0,3)
\put(52, 15){\makebox(8,15){$U(p)\times SO(2(\ell-p)+1)$}}
 \end{picture}
 &
 \begin{picture}(85,45)(0,3)
\put(37, 17){
\makebox(5,15){\shortstack{$2p(2(\ell - p)+1)$ \\ 
\\ $p(p - 1)$}}}\end{picture}
\\  \hline

\begin{picture}(15,45)(0,0)
\put(10, 25){\makebox(0,0){$C^{}_\ell$}}\end{picture}
 &

\begin{picture}(160,45)(-15,-25)
\put(0, 0){\circle{4}}
\put(0,10){\makebox(0,0){1}}
\put(2, 0){\line(1,0){14}}
\put(18, 0){\circle{4}}
\put(20, 0){\line(1,0){10}}
\put(18,10){\makebox(0,0){2}}
\put(40, 0){\makebox(0,0){$\ldots$}}
\put(50, 0){\line(1,0){10}}
\put(60, -14){\makebox(0,0){$( 1 \leq p \leq \ell - 1 )$}}
\put(60, 0){\circle*{4.4}}
\put(60, 10){\makebox(0,0){$p$}}
\put(60, 0){\line(1,0){10}}
\put(80, 0){\makebox(0,0){$\ldots$}}
\put(90, 0){\line(1,0){10}}
\put(102, 0){\circle{4}}
\put(102, 10){\makebox(0,0){$\ell-1$}}
\put(107.2, 1.3){\line(1,0){14.6}}
\put(107.2, -1.3){\line(1,0){14.6}}
\put(103.46, -2.25){\scriptsize $<$}
\put(123.5, 0){\circle{4}}
\put(124, 10){\makebox(0,0){$\ell$}}
\end{picture}
 &

\begin{picture}(110,45)(5,15)\put(50, 20){
\makebox(8,35){$U(p)\times Sp(\ell-p)$}}\end{picture}
 &
 \begin{picture}(80,45)(5,15)\put(30, 20){
\makebox(15,35){\shortstack{$4 p (\ell - p)$ \\ \\ 
$p(p + 1)$}}}\end{picture}
\\  \hline

\begin{picture}(15,45)(0,0)
\put(10, 25){\makebox(0,0){$D^{}_\ell$}}\end{picture}
 &
\begin{picture}(160,40)(-15,-23)
\put(0, 0){\circle{4}}
\put(0,10){\makebox(0,0){1}}
\put(2, 0){\line(1,0){14}}
\put(18, 0){\circle{4}}
\put(20, 0){\line(1,0){10}}
\put(18,10){\makebox(0,0){2}}
\put(40, 0){\makebox(0,0){$\ldots$}}
\put(50, 0){\line(1,0){10}}
\put(60, -17){\makebox(0,0){$( 2 \leq p \leq \ell -2 )$}}
\put(60, 0){\circle*{4.4}}
\put(60, 10){\makebox(0,0){$p$}}
\put(60, 0){\line(1,0){10}}
\put(80, 0){\makebox(0,0){$\ldots$}}
\put(90, 0){\line(1,0){10}}
\put(102, 0){\circle{4}}
\put(103.7, 1){\line(2,1){10}}
\put(103.7, -1){\line(2,-1){10}}
\put(115.5, 6){\circle{4}}
\put(115.5, -6){\circle{4}}
\put(123.5, 14){\makebox(0,0){$\ell-1$}}
\put(120, -16){$\ell$}
\end{picture}
 &
\begin{picture}(110,45)(5,20)\put(50, 25){
\makebox(8,35){\shortstack{$U(p)\times SO(2(\ell-p))$}}}\end{picture}
 &
 \begin{picture}(80,45)(5,20)\put(30, 25){
\makebox(10,35){\shortstack{$4 p (\ell- p)$ \\ \\ 
$p(p - 1)$}}}\end{picture}
\\  \hline 
  
\hline \begin{picture}(15,50)(0,0)
\put(10, 20){\makebox(0,0){$E^{}_6$}}\end{picture}

&
\begin{picture}(160,35)(-25,3)

\put(15, 10){\circle{4}}
\put(17, 10){\line(1,0){16}}
\put(33, 10){\circle*{4.4}}
\put(33, 10){\line(1,0){16}}
\put(51,12){\line(0,1){14}}
\put(51, 10){\circle{4}}
\put(51, 28){\circle{4}}
\put(53,10){\line(1,0){14}}
\put(69,10){\circle{4}}
\put(71,10){\line(1,0){14}}
\put(87,10){\circle{4}}
\end{picture}
  & 
\begin{picture}(110,45)(0,0)\put(50, 20){
\makebox(10,10){$SU(5)\times SU(2)\times U(1)$}}\end{picture}
 &
 \begin{picture}(80,45)(0,0)\put(30, 20){
\makebox(10,10){\shortstack{$40$ \\ \\  \\ 
$10$}}}\end{picture}
\\  \hline
 
 \begin{picture}(15,50)(0,0)
\put(10, 20){\makebox(0,0){}}\end{picture}

&
\begin{picture}(160,35)(-25,5)
\put(15, 10){\circle{4}}
\put(17, 10){\line(1,0){14}}
\put(33, 10){\circle{4}}
\put(35, 10){\line(1,0){14}}
\put(51,12){\line(0,1){16}}
\put(51, 10){\circle{4}}
\put(51, 28){\circle*{4.4}}
 
\put(53,10){\line(1,0){14}}
\put(69,10){\circle{4}}
\put(71,10){\line(1,0){14}}
\put(87,10){\circle{4}}
\end{picture}
  & 
\begin{picture}(110,45)(0,0)\put(50, 17){
\makebox(10,10){$SU(6)\times U(1)$}}\end{picture}
 &
 \begin{picture}(80,45)(0,0)\put(30, 17){
\makebox(10,10){\shortstack{$40$ \\ \\  \\ 
$2$}}}\end{picture}
\\  \hline

\begin{picture}(15,35)(0,0)
\put(10, 13){\makebox(0,0){$E^{}_7$}}\end{picture}

&
\begin{picture}(160,35)(-25, 3)

\put(15, 10){\circle{4}}
\put(17, 10){\line(1,0){14}}
\put(33, 10){\circle{4}}
\put(35, 10){\line(1,0){14}}
\put(51,12){\line(0,1){14}}
\put(51, 10){\circle{4}}
\put(51, 28){\circle{4}}
\put(53,10){\line(1,0){14}}
\put(69,10){\circle{4}}
\put(71,10){\line(1,0){16}}
\put(87,9.5){\circle*{4.4}}
\put(87,10){\line(1,0){16}}
\put(105,10){\circle{4}}
\end{picture}
  & 
\begin{picture}(110,35)(0,0)\put(49, 10){
\makebox(10,10){$SO(10)\times SU(2)\times U(1)$}}\end{picture}
 &
 \begin{picture}(80,35)(0,0)\put(30, 10){
\makebox(10,10){\shortstack{$64$ \\ \\  \\ 
$20$}}}\end{picture}
\\  \hline
 
\begin{picture}(15,35)(0,0)
\put(10, 13){\makebox(0,0){}}\end{picture}

&
\begin{picture}(160,35)(-25, 3)

\put(15, 10){\circle*{4.4}}
\put(15, 10){\line(1,0){16}}
\put(33, 10){\circle{4}}
\put(35, 10){\line(1,0){14}}
\put(51,12){\line(0,1){14}}
\put(51, 10){\circle{4}}
\put(51, 28){\circle{4}}
\put(53,10){\line(1,0){14}}
\put(69,10){\circle{4}}
\put(71,10){\line(1,0){14}}
\put(87,10){\circle{4}}
\put(89,10){\line(1,0){14}}
\put(105,10){\circle{4}}
\end{picture}
  & 
\begin{picture}(110,35)(0,0)\put(50, 10){
\makebox(10,10){$SO(12)\times U(1)$}}\end{picture}
 &
 \begin{picture}(80,35)(0,0)\put(30, 10){
\makebox(10,10){\shortstack{$64$ \\ \\  \\ 
$2$}}}\end{picture}
\\  \hline

\begin{picture}(15,35)(0,0)
\put(10, 13){\makebox(0,0){}}\end{picture}

&
\begin{picture}(160,35)(-25, 3)

\put(15, 10){\circle{4}}
\put(17, 10){\line(1,0){14}}
\put(33, 10){\circle{4}}
\put(35, 10){\line(1,0){14}}
\put(51,12){\line(0,1){16}}
\put(51, 10){\circle{4}}
\put(51, 28){\circle*{4.4}}
\put(53,10){\line(1,0){14}}
\put(69,10){\circle{4}}
\put(71,10){\line(1,0){14}}
\put(87,10){\circle{4}}
\put(89,10){\line(1,0){14}}
\put(105,10){\circle{4}}
\end{picture}
  & 
\begin{picture}(110,35)(0,0)\put(50, 10){
\makebox(10,10){$SU(7)\times U(1)$}}\end{picture}
 &
 \begin{picture}(80,35)(0,0)\put(30, 10){
\makebox(10,10){\shortstack{$70$ \\ \\  \\ 
$14$}}}\end{picture}
\\  \hline

\begin{picture}(15,35)(0,0)
\put(10, 13){\makebox(0,0){$E^{}_8$}}\end{picture}

&
\begin{picture}(160,35)(-25, 3)

\put(15, 10){\circle{4}}
\put(17, 10){\line(1,0){14}}
\put(33, 10){\circle{4}}
\put(35, 10){\line(1,0){14}}
\put(51,12){\line(0,1){14}}
\put(51, 10){\circle{4}}
\put(51, 28){\circle{4}}
\put(53,10){\line(1,0){14}}
\put(69,10){\circle{4}}
\put(71,10){\line(1,0){14}}
\put(87,10){\circle{4}}
\put(89,10){\line(1,0){14}}
\put(105,10){\circle{4}}
\put(107,10){\line(1,0){16}}
\put(123,10){\circle*{4.4}}
 \end{picture}
  & 
\begin{picture}(110,35)(0,0)\put(50, 10){
\makebox(10,10){$E_{7}\times U(1)$}}\end{picture}
 &
 \begin{picture}(80,35)(0,0)\put(30, 10){
\makebox(10,10){\shortstack{$112$ \\ \\  \\ 
$2$}}}\end{picture}
\\  \hline
%
\begin{picture}(15,35)(0,0)
\put(10, 13){\makebox(0,0){}}\end{picture}
&
\begin{picture}(160,35)(-25, 3)
\put(15, 10){\circle*{4.4}}
\put(15, 10){\line(1,0){16}}
\put(33, 10){\circle{4}}
\put(35, 10){\line(1,0){14}}
\put(51,12){\line(0,1){14}}
\put(51, 10){\circle{4}}
\put(51, 28){\circle{4}}
\put(53,10){\line(1,0){14}}
\put(69,10){\circle{4}}
\put(71,10){\line(1,0){14}}
\put(87,10){\circle{4}}
\put(89,10){\line(1,0){14}}
\put(105,10){\circle{4}}
\put(107,10){\line(1,0){14}}
\put(123,10){\circle{4}}
 \end{picture}
  & 
\begin{picture}(110,35)(0,0)\put(50, 10){
\makebox(10,10){$SO(14)\times U(1)$}}\end{picture}
 &
 \begin{picture}(80,35)(0,0)\put(30, 10){
\makebox(10,10){\shortstack{$128$ \\ \\  \\ 
$28$}}}\end{picture}
\\  \hline
%
%
\begin{picture}(15,30)(0,0)
\put(10, 13){\makebox(0,0){$F^{}_4$}}\end{picture}
&
\begin{picture}(160,25)(30, -3)

\put(87,10){\circle{4}}
\put(89,10){\line(1,0){14}}
\put(105,10){\circle{4}}
\put(107, 11.3){\line(1,0){12.3}}
\put(107, 8.7){\line(1,0){12.3}}
\put(116, 7.75){\scriptsize $>$}
\put(124,10){\circle{4}}
\put(126,10){\line(1,0){16}}
\put(142,10){\circle*{4.4}}
\end{picture}
  & 
\begin{picture}(110,25)(0,2)\put(50, 10){
\makebox(10,10){$SO(7)\times U(1)$}}\end{picture}
 &
 \begin{picture}(80,25)(0,2)\put(30, 10){
\makebox(10,10){\shortstack{$16$ \\ \\  \\ 
$14$}}}\end{picture}
\\  \hline

\begin{picture}(15,30)(0,0)
\put(10, 13){\makebox(0,0){}}\end{picture}
& 

\begin{picture}(160,25)(30, -3)

 \put(87,10){\circle*{4.4}}
\put(88,10){\line(1,0){15}}
\put(105,10){\circle{4}}
\put(107, 11.3){\line(1,0){12.3}}
\put(107, 8.7){\line(1,0){12.3}}
\put(116, 7.75){\scriptsize $>$}
\put(124,10){\circle{4}}
\put(126,10){\line(1,0){14}}
\put(142,10){\circle{4}}

\end{picture}
  & 
\begin{picture}(110,25)(0,2)\put(50, 10){
\makebox(10,10){$Sp(3)\times U(1)$}}\end{picture}
 &
 \begin{picture}(80,25)(0,2)\put(30, 10){
\makebox(10,10){\shortstack{$28$ \\ \\  \\ 
$2$}}}\end{picture}
\\  \hline
\begin{picture}(15,30)(0,0)
\put(10, 13){\makebox(0,0){$G^{}_2$}}\end{picture}
&
\begin{picture}(160,25)(30, -3)

\put(105,10){\circle*{4.4}}
\put(106.8, 8.7){\line(1,0){12.9}}
\put(106.8,10){\line(1,0){16.1}}
\put(106.8,11.3){\line(1,0){12.9}}
\put(117, 7.75){\scriptsize $>$}
\put(125,10){\circle{4}}

\end{picture}
  & 
\begin{picture}(110,25)(0,2)\put(50, 10){
\makebox(10,10){$U(2)$ }}\end{picture}
 &
 \begin{picture}(80,25)(0,2)\put(30, 10){
\makebox(10,10){\shortstack{$8$ \\ \\  \\ 
$2$}}}\end{picture}
\\  \hline
\end{tabular} 
\end{center} 
\medskip
  
  \subsection{The classical flag manifolds}  
   
For the flag manifolds $B(\ell, m), C(\ell, m)$ and $D(\ell, m)$ the simplest method to compute the dimensions of the irreducible submodules is the straightforwad computation of the isotropy representation.  We will describe only the  case of the flag manifold $C(\ell, m)=Sp(\ell)\times U(\ell-m)\times Sp(m)$.  Results for the spaces $B(\ell, m)$ and $D(\ell, m)$  are obtained by a similar procedure.

 Set  $\ell-m=p$.  Let  $\mu_{p} : U(p)\to\Aut(\mathbb{C}^{p})$ and $\nu_{\ell} : Sp(\ell)\to \Aut(\mathbb{C}^{2\ell})$ be  the standard representations of  the Lie groups $U(p)$ and $Sp(\ell)$ respectively.  It is known (cf. \cite{Wa1}) that $\Ad^{U(n)}\otimes\mathbb{C}=\mu_{p}\otimes_{\mathbb{C}}\bar{\mu_{p}}$ and $\Ad^{Sp(\ell)}\otimes\mathbb{C}=\Sym^{2}\nu_{\ell}$, where $S^{2}$ is the second symmetric power of $\mathbb{C}^{2\ell}$.   Then
\begin{eqnarray*}
\Ad^{Sp(\ell)}\otimes \ \mathbb{C}\big|_{U(p)\times Sp(m)} &=& \Sym^{2}(\nu_{\ell}\big|_{U(p)\times Sp(m)}) =   \Sym^{2}(\mu_{p}\oplus\bar{\mu}_{p}\oplus\nu_{m}) \\
&=& \Sym^{2}\mu_{p}\oplus \Sym^{2}\bar{\mu}_{p}\oplus \Sym^{2}\nu_{m}\oplus(\mu_{p}\otimes\bar{\mu}_{p}) \\
&& \oplus (\mu_{p}\otimes \nu_{m})\oplus (\bar{\mu}_{p}\otimes \nu_{m}).
\end{eqnarray*}
 
The term $\Sym^{2}\nu_{m}$ corresponds to the complexified adjoint representation of $Sp(m)$ and  the term $\mu_{p}\otimes\bar{\mu}_{p}$ corresponds to the complexified adjoint representation of $U(p)$.  Therefore the complexified isotropy representation of $C(\ell, m)$ is given by  $(\mu_{p}\otimes \nu_{m})\oplus (\bar{\mu}_{p}\otimes \nu_{m})\oplus\Sym^{2}\mu_{p}\oplus \Sym^{2}\bar{\mu}_{p} .$  This is  the direct sum   of four  complex $\ad(\fr{k}^{\mathbb{C}})$-invariant inequivalent submodules of dimension $2pm$, $2pm$, $\binom{p+1}{2}$ and $\binom{p+1}{2}$, respectively.  The representations $\bar{\mu}_{p}\otimes \nu_{m}$ and $\mu_{p}\otimes \nu_{m}$ are conjugate to each other and the same holds for the pair $\Sym^{2}\bar{\mu}_{p}$ and  $\Sym^{2}\mu_{p}$.  Thus $\fr{m}$ decomposes into a direct sum of two real irreducible submodules $\fr{m}_{1}, \fr{m}_{2}$  of dimensions $4p(\ell-p)$ and $p(p+1)$ respectively.

\subsection{ Review of Representation Theory } 
In order to compute the dimensions of $\fr{m}_{1}, \fr{m}_{2}$ for the exceptional flag manifolds,  we need to review some facts from representation theory of complex semisimple Lie algebras and fix  notation.    We will first determine the  irreducible (finite-dimensional) representations of a complex semisimple Lie algebra, by use of the {\it highest weight}.

Let $\fr{g}$ be a complex semisimple Lie algebra of rank $\ell$, $\fr{h}$ a  Cartan subalgebra of $\fr{g}$, and $\fr{g}=\fr{h}\oplus\sum_{\al\in R}\fr{g}_{\al}$   the corresponding root space decomposition.  Let $\rho : \fr{g}\to{\rm End}(V)$  be a (finite-dimensional) representation     on the (complex) vector space $V$.    Then there is a  decomposition $V=\bigoplus_{\lambda }V_{\lambda }$, where $V_{\lambda }$ denotes the subspace of $V$ defined by 
\[
V_{\lambda }=\big\{ u\in V : \rho(H)u=\lambda (H)u \ \mbox{for all} \ H\in\fr{h}\big\}.
\]
 If $V_{\lambda }\neq 0$, then the linear form $\lambda \in\fr{h}^{*}$ is called a {\it weight} of $\rho$ and the eigenspace $V_{\lambda }$   is called the {\it weight space}.   The fact that $V$ is   finite dimensional implies that there is only a finite number of weights. It is well known that every weight is real-valued on the real form $\fr{h}_{\mathbb{R}}$ and is algebraically integral, that is $2\frac{(\lambda , \al)}{(\al, \al)}\in\mathbb{Z}$ for all $\al\in R$.  This property follows by restricting $\rho$ to copies of $\fr{sl}(2, \mathbb{C})$ lying in $\fr{g}$ and then using the representation theory of $\fr{sl}(2, \mathbb{C})$.  
 
 Let   $\Pi=\{\al_1, \ldots, \al_\ell\}$ be a system of simple roots for $R$. Then the elements $\{\Lambda _{1}, \ldots, \Lambda _{\ell}\}$,  $(\Lambda _{i}\in\fr{h}^{*})$ defined by $\frac{2(\Lambda _{i}, \al_{j})}{(\al_{j}, \al_{j})}=\delta_{ij}$,   $(i, j=1, \ldots, \ell)$ are called the {\it fundamental weights}. Under the identification of $\fr{h}$ and $\fr{h}^*$ via the Killing form, for any root $\al\in\fr{h}^{*}$ we consider the corresponding coroot $h_{\al}=\frac{2H_{\al}}{(H_{\al}, H_{\al})}=\frac{2\al}{(\al,\al)}\in\fr{h}$.  For the simple roots $\al_{i}$ we set $h_{i}=h_{\al_{i}}=\frac{2\al_{i}}{(\al_{i},\al_{i})}$. Then the fundamental weights $\Lambda _{i}$ satisfy $\Lambda _{i}(h_{j})=\delta_{ij}$, so they form a basis of $\fr{h}^*$ dual to the basis $\{h_{i}\}$  (with respect to the inner product    $( \ , \ )$   on $\fr{h}^{*}$).  The lattice of all integer combinations of the fundamental weights is called the  {\it weight lattice} $\Lambda $ of $\fr{g}$.    An important  weight is the sum of the fundamental weights $\delta=\sum_{i=1}^{\ell}\La_{i}$.  This  is also equal to half the sum of the positive roots of $R$.
 
  We shall  now give the relationship between the fundamental weights and the simple roots.   Since the fundamental weights form a basis of $\fr{h}^*$, there exist $c_{ij}\in\mathbb{C}$ such that $\al_{i}=\sum_{j=1}^{\ell}c_{ij}\Lambda _{j}$.  An easy calculation shows that $\al_{i}(h_{j})=c_{ij}$, therefore
 \[
 c_{ij}=\al_{i}(h_{j})=\al_{i}\Big(\frac{2\al_{j}}{(\al_{j}, \al_{j})}\Big)=\Big(\al_{i}, \frac{2\al_{j}}{(\al_{j}, \al_{j})}\Big)=\frac{2(\al_{i}, \al_{j})}{(\al_{j}, \al_{j})}=a_{ji},
 \]
where $a_{ji}$ is the transpose of the Cartan matrix $A=(a_{ij})=\big(\frac{2(\al_{i}, \al_{j})}{(\al_{i},\al_{i})}\big)$ of $\fr{g}$ (cf. \cite{Kna}).    Thus $\al_{i}=\sum_{j=1}^{\ell}a_{ji}\Lambda _{j}$, and the matrix expressing the simple roots as linear combinations of the fundamental weights is the transpose of the Cartan matrix. In particular, we note that all simple roots are integral combinations of fundamental weights and thus the lattice generated by the root system $R$ is contained in $\Lambda $.  

  Since $\fr{g}$ is a complex semisimple Lie algebra it is well known that every finite dimensional representation  of $\fr{g}$ is   a direct sum of irreducible (sub)representations.   Therefore, in order to study $\rho$ it is sufficient to study the irreducible representations of $\fr{g}$.  Fix a lexicographic ordering $R^{+}$ on $R$  (or equivalently,   a positive Weyl chamber of $\fr{g}$). This induces a partial ordering on all possible weights: $\lambda  >\mu$ if $\lambda -\mu$ is a sum of positive roots.  The maximal weight  $\lambda $ with respect to this ordering is called the {\it highest weight}.  Each (finite-dimensional) irreducible representation of $\fr{g}$ contains a highest weight $\lambda$ and will  denote this   representation by $\rho_{\lambda }$. The highest weight $\lambda$ characterizes completely  $\rho_{\lambda}$ since it  determines all of its properties, such as dimension.    Now, a weight $\lambda $ is called {\it dominant} if $(\lambda , \al)\geq 0$, for all simple roots $\al$, i.e.  if $\lambda $ lies in the closure of the Weyl chamber corresponding to $R^{+}$.  For example,  $\delta$  is a dominant weight.   We denote the collection of all dominants weights  by $\Lambda ^{+}$.  Equivalently, we can say that an element $\lambda \in\fr{h}^*$ lies in $\Lambda ^+$ if and only if $\lambda (h_{i})\in\mathbb{Z}$ and $\lambda (h_{i})\geq 0$ for all $i=1, \ldots, \ell$.   Dominant  weights are important since  any such weight arises as the highest weight of an irreducible representation of $\fr{g}$.  More specifically,  apart from equivalence,   irreducible finite-dimensional representations $\rho$ of a semisimple Lie algebra $\fr{g}$ are in one-to-one correspondence with the dominant weights $\lambda \in \Lambda ^+$.  The correspondence is that $\lambda $ is the highest weight of $\rho$, i.e.  $\rho\cong\rho_{\lambda }$.   The dimension of an irreducible representation of $\fr{g}$ is given by the following  formula due to H. Weyl.

\begin{prop}(\cite{Kna})
Let $\rho_{\lambda }$ be an irreducible representation of a complex semisimple Lie algebra $\fr{g}$ with highest weight $\lambda $.  Then  
\[
\dim_{\mathbb{C}}\rho_{\lambda } = \prod_{\al\in R^{+}}\Big(1+\frac{(\lambda, \al)}{(\delta, \al)}\Big).
\]
  \end{prop}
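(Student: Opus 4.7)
The plan is to derive the formula from the Weyl character formula
\[
\tr \rho_{\lambda}(e^{H}) \;=\; \frac{A_{\lambda+\delta}(H)}{A_{\delta}(H)}, \qquad A_{\mu}(H) := \sum_{w\in W}\eps(w)\,e^{(w\mu)(H)},
\]
where $W$ is the Weyl group of $\fr{g}$ and $\eps$ is its sign character. The dimension is $\tr\rho_{\lambda}(e^{0})$, but this is an indeterminate $0/0$, so I would evaluate along the one-parameter family $H = tH_{\delta}$ (with $H_{\delta} \in \fr{h}$ the element dual to $\delta$ under the identification $\fr{h}^{*}\cong\fr{h}$ induced by $(\cdot,\cdot)$) and then pass to the limit $t\to 0$.

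The central step is to factorize both $A_{\delta}(tH_{\delta})$ and $A_{\lambda+\delta}(tH_{\delta})$ as products over $R^{+}$. The denominator is handled directly by Weyl's denominator identity
\[
A_{\delta}(H) \;=\; \prod_{\al\in R^{+}}\bigl(e^{\al(H)/2}-e^{-\al(H)/2}\bigr),
\]
which yields $A_{\delta}(tH_{\delta}) = \prod_{\al}\bigl(e^{t(\al,\delta)/2}-e^{-t(\al,\delta)/2}\bigr)$. For the numerator, which \emph{a priori} is not a product, I would exploit $W$-invariance of $(\cdot,\cdot)$ together with the reindexing $w\mapsto w^{-1}$ to rewrite $A_{\lambda+\delta}(tH_{\delta}) = A_{\delta}(tH_{\lambda+\delta})$, and then apply the denominator identity a second time at the shifted point $tH_{\lambda+\delta}$. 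This gives
\[
A_{\lambda+\delta}(tH_{\delta}) \;=\; \prod_{\al\in R^{+}}\bigl(e^{t(\al,\lambda+\delta)/2}-e^{-t(\al,\lambda+\delta)/2}\bigr).
\]

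Taking the ratio and letting $t\to 0$ factor-by-factor via $\sinh(ts)/\sinh(tr)\to s/r$, I obtain
\[
\dim_{\mathbb{C}}\rho_{\lambda} \;=\; \prod_{\al\in R^{+}}\frac{(\lambda+\delta,\al)}{(\delta,\al)} \;=\; \prod_{\al\in R^{+}}\Bigl(1+\frac{(\lambda,\al)}{(\delta,\al)}\Bigr),
\]
which is the desired formula. The only conceptually nontrivial point is the factorization of the antisymmetric sum $A_{\lambda+\delta}$: it does not decompose as a product in general, and it is precisely the specific substitution $H = tH_{\delta}$ combined with the $W$-invariance reindexing that converts it into an instance of Weyl's denominator identity evaluated at $tH_{\lambda+\delta}$. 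Once this identification is in place, the remaining limit is routine.
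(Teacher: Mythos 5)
Your derivation is correct and is precisely the standard proof of Weyl's dimension formula (character formula plus the reindexing $A_{\lambda+\delta}(tH_{\delta})=A_{\delta}(tH_{\lambda+\delta})$, the denominator identity, and the $t\to 0$ limit), which is the argument given in the reference \cite{Kna} that the paper cites; the paper itself states the proposition without proof. Nothing to compare beyond that --- the only hypotheses you implicitly use, regularity of $\delta$ so that $(\delta,\al)\neq 0$ for all $\al\in R^{+}$ and continuity of the (finite exponential sum) character at $0$, both hold.
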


\subsection{ Exceptional flag manifolds }
 We will use  Proposition 1 to  compute the dimensions of the irreducible $\Ad(K)$-submodules $\fr{m}_{1}$ and $\fr{m}_{2}$  for the exceptional flag manifolds presented in Table 3.  We will only treat two cases, and the other are similar.  For the root systems of the exceptional Lie algebras we use the notation from \cite{AA}.
  
 \smallskip
{\bf Case of $\bold{G_2}$.}     We fix a system of simple roots to be  $\Pi=\{\al_{1}, \al_{2}\}=\{e_{2}-e_{3}, -e_2\},$ and let $R^{+}=\{\al_{1}, \al_{2}, \al_{1}+\al_{2}, \al_{1}+2\al_{2}, \al_{1}+3\al_{2}, 2\al_{1}+3\al_{2}\}$.  It is $(\al_{1}, \al_{1})=2$ and $(\al_{2}, \al_{2})=\frac{2}{3}$. The maximal root is  expressed in terms of simple roots as $\mu=2\al_1+3\al_2$.   The Cartan matrix $A=(a_{ij})$ of $G_2$  is given by (recall that  the Cartan matrix depends on the enumeration of $\Pi$)
\[
A =  \left( \begin{tabular}{cc}
 2  & -1 \\
-3  &  2 
\end {tabular}
\right)   
\]

\medskip
Consider  the painted Dynkin diagram
$
 \begin{picture}(29,0) (102, 2.9)
\put(105,5){\circle*{4.4}}
\put(105, 11.3){\makebox(0,0){$\al_1$}}
\put(106.8, 6.3){\line(1,0){13.2}}
\put(106.8, 5.3){\line(1,0){16.1}}
\put(106.8, 4.3){\line(1,0){12.9}}
\put(117, 3.2){\scriptsize $>$}
\put(125,5){\circle{4}}
\put(125.5, 11.3){\makebox(0,0){$\al_2$}}
\end{picture}
.$  \hskip 0.15cm It determines the generalized flag manifold $G_{2}/U(2)$, where $U(2)$ is represented by the short root $\al_2$. Thus  $R_{K}^{+}=\{\al_2\}$. The highest weights of the irreducible $\Ad(K)$-submodules $\fr{m}_{1}$ and $\fr{m}_{2}$ are given by $\lambda_{1}=\al_{1}+3\al_{2}$ and $\lambda_{2}=\mu$  respectively.  By using the transpose of the Cartan matrix, we obtain that
 \[ \al_1=2\La_1-3\La_2, \qquad \al_2=-\La_1+3\La_2, \]
where $\La_1, \La_2$  are the fundamental weights of $G_2$. Thus $\lambda_1=-\La_1+3\La_2$ and $\lambda_2=\La_1$.  Now we can use Weyl's formula, and obtain that $\dim_{\mathbb{C}}\fr{m}_{1}=\big(1+\frac{3}{1})=4$ and $\dim_{\mathbb{C}}\fr{m}_{2}=1$,  therefore $\dim_{\mathbb{R}}\fr{m}_{1}=8$ and $\dim_{\mathbb{R}}\fr{m}_{2}=2$.

\smallskip
{\bf Case of $\bold{F_4}$.}     Let  $\Pi=\{\al_{1}=e_{2}-e_{3}, \al_{2}=e_{3}-e_{4}, \al_{3}=e_{4}, \al_{4}=\frac{1}{2}(e_{1}-e_{2}-e_{3}-e_{4})\}$ be a system of simple roots.  Recall that $F_{4}$ contains long and short roots.  It is $(\al_{1}, \al_{1})=(\al_{2}, \al_{2})=2$ and  $(\al_{3}, \al_{3})=(\al_{4}, \al_{4})=1$.  The maximal root is expressed in terms of simple roots as $\mu=2\al_{1}+3\al_{2}+4\al_{3}+2\al_{4}$.   The Cartan matrix of $F_{4}$ is given by 
 \[
A =  \left( \begin{tabular}{cccc}
 2  & -1 &  0 &  0  \\
-1  &  2 & -1 &  0  \\
 0  & -2 &  2 & -1 \\
 0  &  0 & -1 &  2 
 \end {tabular}
\right)   
\]
Let $\{\Lambda_{1}, \Lambda_{2}, \Lambda_{3}, \Lambda_{4}\}$ be the fundamental weights of $F_{4}$.  Then by using the transpose of the above matrix we obtain that 
 \begin{eqnarray*}
\al_{1}&=& \ 2\Lambda _{1}-\Lambda _{2}, \\
\al_{2}&=&-\Lambda _{1}+2\Lambda _{2}-2\Lambda _{3}, \\
\al_{3}&=&-\La_{2}+2\La_{3}-\La_{4}, \\
\al_{4}&=&-\La_{3}+2\La_{4}. 
 \end{eqnarray*}

 Consider   the painted Dynkin diagram 
 \[
 \begin{picture}(150,20)(30, -3)

\put(87,5){\circle{4}}
\put(87, 11.5){\makebox(0,0){$\al_1$}}
\put(89,5){\line(1,0){14}}
\put(105,5){\circle{4}}
\put(105, 11.5){\makebox(0,0){$\al_2$}}
\put(107, 6.1){\line(1,0){12.3}}
\put(107, 4.5){\line(1,0){12.3}}
\put(116, 3.2){\scriptsize $>$}
\put(123.5,5){\circle{4}}
\put(123.5, 11.5){\makebox(0,0){$\al_3$}}
\put(126,5){\line(1,0){16}}
\put(142,5){\circle*{4.4}}
\put(142, 11.5){\makebox(0,0){$\al_4$}}
\end{picture}
\]
It detemines the generalized flag manifold   $M=G/K=F_{4}/SO(7)\times U(1)$.   The semisimple part of the isotropy subalgebra $\fr{k}^{\mathbb{C}}$ is the complex Lie algebra $\fr{so}(7, \mathbb{C})$ and its root system is generated by the set $\Pi_{K}=\{\al_{1}, \al_{2}, \al_{3}\}$.  In particular,  we obtain that
\[
R_{K}^{+}=\{\al_{1}, \al_{2}, \al_{3}, \al_{1}+\al_2, \al_{2}+\al_{3}, \al_{2}+2\al_{3}, \al_{1}+\al_{2}+\al_{3}, \al_{1}+\al_2+2\al_3, \al_1+2\al_2+2\al_3\}.
\]
The highest weight of the irreducible $\Ad(K)$-submodule  $\fr{m}_{1}$ is given by $\lambda_{1}=\al_{1}+2\al_{2}+3\al_3+\al_4$, and for $\fr{m}_{2}$ the corresponding highest weight is equal to the highest root, i.e.  $\lambda_{2}=\mu$.  By using the above expressions of simple roots in terms of the fundamental weights we  obtain that
$\lambda_{1}=\Lambda_3-\Lambda_4$ and $\lambda_2=\La_1$.  

  Now  we use  Weyl's formula.    Let  $\al=\sum_{i=1}^{3}c_{i}\al_{i}$ be a postive root of $R_{K}^{+}$. Since  $(\La_{i}, \al_j)\neq 0$ if and only if $i=j$, then for the  submodule $\fr{m}_{1}$ we have that
\begin{eqnarray*}
 (\lambda_{1}, \al) =(\Lambda_3-\Lambda_4, \al) &=& \sum_{i=1}^{3}c_{i}(\Lambda_3-\Lambda_4, \al_{i}) \\
 &=& c_{1}(\Lambda_3-\Lambda_4, \al_{1})+c_{2}(\Lambda_3-\Lambda_4, \al_{2})+c_{3}(\Lambda_3-\Lambda_4, \al_{3}) \\
 &=& c_{3}(\Lambda_3, \al_{3}) = c_{3}\frac{(\al_{3}, \al_{3})}{2}=\frac{c_{3}}{2},
 \end{eqnarray*}
 where $c_{3}\in\{0, 1, 2\}$.  For the weight $\delta_{K}=\La_1+\La_2+\La_3$ and for  $\al\in R_{K}^{+}$, we obtain that 
\begin{eqnarray*}
(\delta_{K}, \al) &=& (\La_1+\La_2+\La_3, \al) = \sum_{i=1}^{3}c_{i}(\La_1+\La_2+\La_3, \al_{i}) \\
&=& c_{1}(\La_1+\La_2+\La_3, \al_1)+c_{2}(\La_1+\La_2+\La_3, \al_2)+c_{3}(\La_1+\La_2+\La_3, \al_3) \\
&=& c_{1}(\La_1 , \al_{1})+ c_{2}(\La_2, \al_{2})+ c_{3}(\La_3, \al_{3}) = c_{1}\frac{(\al_{1}, \al_{1})}{2}+c_{2}\frac{(\al_{2}, \al_{2})}{2}+c_{3}\frac{(\al_{3}, \al_{3})}{2}\\
&=& c_{1}+c_{2}+\frac{c_{3}}{2}.
\end{eqnarray*}
From Weyl's formula it follows that
 \begin{eqnarray*}
 \dim_{\mathbb{C}}\fr{m}_{1}&=& \prod_{\al\in R_{K}^{+}}\Big(1+\frac{(\lambda_{1}, \al)}{(\delta_{K},\al)}\Big)\\
 &=&(1+\frac{1/2}{1/2})(1+\frac{1/2}{1+1/2})(1+\frac{1}{1+1})(1+\frac{1/2}{1+1+1/2})(1+\frac{1}{1+1+1})\\
 && (1+\frac{1}{1+2+1})=8,
 \end{eqnarray*}
so $\dim_{\mathbb{R}}\fr{m}_{1}=16$.
 
 For the  irreducible submodule $\fr{m}_{2}$ the calculations are simpler.  Since $\lambda_2=\Lambda_1$, for any positive root $\al=\sum_{i=1}^{3}c_{i}\al_{i}\in R_{K}^{+}$ it is
 \[
 (\lambda_{2}, \al)=(\La_1, \al) =\sum_{i=1}^{3}c_{i}(\La_1, \al_{i}) = c_{1}(\La_1, \al_{1})=c_{1}\frac{(\al_1, \al_{1})}{2}=c_1, \]
 where $c_{1}\in\{0, 1\}$.  
Thus, 
\begin{eqnarray*}
 \dim_{\mathbb{C}}\fr{m}_{2}&=& \prod_{\al\in R_{K}^{+}}\Big(1+\frac{(\lambda_{2}, \al)}{(\delta_{K},\al)}\Big)\\
 &=& (1+\frac{1}{1})(1+\frac{1}{1+1})(1+\frac{1}{1+1+1/2})(1+\frac{1}{1+1+1})(1+\frac{1}{1+2+1})=7, 
 \end{eqnarray*}
  so $\dim_{\mathbb{R}}\fr{m}_{2}=14$.

 We remark that it is also possible to use  Proposition 1 for the flag manifolds of a classical Lie group, but the computations are more complicated.

\markboth{Andreas Arvanitoyeorgos and Ioannis Chrysikos}{Invariant Einstein metrics  on  generalized flag manifolds with two isotropy summands}
\section{Invariant Einstein metrics on flag manifolds}
\markboth{Andreas Arvanitoyeorgos and Ioannis Chrysikos}{Invariant Einstein metrics  on  generalized flag manifolds with two isotropy summands}

Let $(M=G/K, g)$ be a Riemannian generalized flag manifold with $\fr{m}=\fr{m}_{1}\oplus\fr{m}_2$. In this  section we find the $G$-invariant Einstein metrics of $M$ by use of the variational method.    First we will  compute the scalar curvature of a $G$-invariant metric on $M$ by use of formula (3).  Since $G$ is simple we have  $b_{i}=1$. Thus, according to the relation (2), any $G$-invariant metric $g$ on $M$ is determined by two positive parameters $x_{1}, x_{2}$ and  has the form 
\begin{equation}
\left\langle \ , \ \right\rangle=x_{1}(-B)|_{\fr{m}_{1}}+x_{2}(-B)|_{\fr{m}_{2}}.  
 \end{equation}
 \begin{prop}
 Let $M=G/K$ be a generalized flag manifold with two isotropy summands and let $g$ be a  $G$-invariant Riemannian metric on $M$ given by (8). Then the scalar curvature   of  $g$  is given by:
 \[ 
S(g)=\frac{1}{2}\big(\frac{d_{1}}{x_{1}}+\frac{d_{2}}{x_{2}}\big)-\frac{1}{4}\big(t\frac{x_{2}}{x_{1}^{2}}+2t\frac{1}{x_{2}}\big),
\]
where $t=[112]\neq0$.

 \end{prop}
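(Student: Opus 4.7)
The plan is to specialize the general scalar curvature formula (3) to the present setting $q=2$. Since $G$ is simple, $Q = -B$ restricted to $\fr{m}_i$ gives $b_1=b_2=1$, so (3) becomes
$$S(g)=\frac12\Big(\frac{d_1}{x_1}+\frac{d_2}{x_2}\Big)-\frac14\sum_{i,j,k\in\{1,2\}}[ijk]\frac{x_k}{x_ix_j}.$$
All the remaining work reduces to identifying, among the eight triples $(i,j,k)\in\{1,2\}^3$, those for which $[ijk]\neq 0$.

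The key tool is the bracket relations (7). With $q=2$ one has $\fr{m}_n=0$ for $n\geq 3$, so those relations collapse to
$$[\fr{m}_1,\fr{m}_1]\subset \fr{k}\oplus\fr{m}_2,\qquad [\fr{m}_1,\fr{m}_2]\subset \fr{m}_1,\qquad [\fr{m}_2,\fr{m}_2]\subset \fr{k}.$$
From the definition $A^\gamma_{\al\be}=Q([X_\al,X_\be],X_\gamma)$ and these inclusions, the vanishing triples are $[111]=[222]=[122]=[212]=[221]=0$. By the three-fold symmetry of $[ijk]$, the only surviving triple is $[112]=[121]=[211]=:t$. Substituting these into the sum yields exactly the three terms
$$t\,\frac{x_2}{x_1^2}+\frac{t}{x_2}+\frac{t}{x_2}=t\,\frac{x_2}{x_1^2}+\frac{2t}{x_2},$$
producing the formula claimed.

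The remaining point is $t\neq 0$. This follows because $\fr{m}_2\neq 0$ is equivalent to $R^+(\al_{i_o},2)\neq\emptyset$; indeed, each of the flag manifolds of Table 3 is associated with a black node of mark $2$, and in particular the highest root $\mu$ satisfies $m_{i_o}=2$, so $\mu\in R^+(\al_{i_o},2)$. One can then decompose $\mu=\al+\be$ with $\al,\be\in R^+(\al_{i_o},1)$, obtaining $[E_\al,E_\be]=N_{\al,\be}E_\mu$ with $N_{\al,\be}\neq 0$. Passing to the real basis $\{A_\gamma,B_\gamma\}$ of $\fr{m}$ yields a non-vanishing structure constant $A^\gamma_{\al\be}$ with $X_\gamma\in\fr{m}_2$, which enters the sum defining $[112]$ as a positive square; since $[ijk]$ is nonnegative (as a sum of squares), we conclude $t>0$.

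The main obstacle is less in the derivation than in the bookkeeping: one must be careful when translating the complex root-vector identity $[E_\al,E_\be]=N_{\al,\be}E_\mu$ into the real basis $A_\gamma,B_\gamma$ to confirm that $t$ receives a strictly positive contribution. Everything else is a straightforward specialization of (3) together with the bracket relations (7).
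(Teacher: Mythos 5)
Your proposal is correct and follows essentially the same route as the paper: specialize formula (3) with $b_1=b_2=1$, use the bracket relations (7) to conclude that $[111]=[222]=[122]=[212]=[221]=0$ so that only $[112]=[121]=[211]=t$ survives, and collect the three resulting terms. You in fact go slightly further than the paper's own proof by sketching why $t\neq 0$ (via $\mu\in R^{+}(\al_{i_o},2)$ and a decomposition $\mu=\al+\be$ with $\al,\be\in R^{+}(\al_{i_o},1)$, i.e.\ the fact that $\fr{m}_2=[\fr{m}_1,\fr{m}_1]\bmod\fr{k}$), a point the paper asserts without argument.
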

 \begin{proof}   We set $d_{1}=\dim\fr{m}_{1}$, and $d_{2}=\dim\fr{m}_{2}$.  In order to use (3) we need to find the triples $[ijk]$, where $i, j, k\in\{1, 2\}$.  Since $[ijk]$ is symmetric in all three entries it is $[111]=[222]=0$.  Relations (7) imply that
  \[
    [\fr{m}_{1}, \fr{m}_{2}]\subset\fr{m}_{1}, \quad [\fr{m}_{1}, \fr{m}_{1}]\subset\fr{m}_{2}\oplus\fr{k}, \quad[\fr{m}_{2}, \fr{m}_{2}]\subset\fr{k},
  \]
 so  a straightforward computation gives that
 \[
 [221]=[212]=[122]=0,
 \]
 thus the only non-zero triples are $[112]=[211]=[121]$. The result  now follows. 
 \end{proof}

Let $V(g)=x_{1}^{d_{1}}x_{2}^{d_2}$ be the volume  of a $G$-invariant metric $g$ on $M$  given by (8).  In order to determine the $G$-invariant Einstein metrics of  $M$  subject to the constrained condition   $V=1$, we need to study the critical points  of the restricted scalar curvature $S\big|_{\mathcal{M}_{1}^{G}}$.  According to the Lagrange multipliers method   a metric $g=(x_{1}, x_{2})\in \mathcal{M}_{1}^{G}$ is  a critical point  of  $S\big|_{\mathcal{M}_{1}^{G}}$ if and only if  it satisfies the   equation
\[
\nabla S(g)=c\nabla V(g),
\]
where $\nabla$ denotes the gradient field and $c$ is the Einstein constant.     Note  that since the irreducible submodules $\fr{m}_{1}$ and $\fr{m}_2$ are inequivalent, the space $\mathcal{M}_{1}^{G}$ is a   2-dimensional flat Riemannian manifold, i.e.  every point of  $\mathcal{M}_{1}^{G}$ has a neighborhood  locally isometric to an open set in $\mathbb{R}^{2}$.  

\begin{theorem}
Let $M=G/K$	 be a generalized flag manifold with two isotropy summands, i.e.  $\fr{m}=\fr{m}_{1}\oplus\fr{m}_{2}$ with $d_{i}= \dim\fr{m}_{i}$  $(i=1,2)$.  Then $M$ admits two $G$-invariant Einstein metrics. One is K\"ahler given by $x_{1}=1, x_{2}=2$, and the other is non K\"ahler   given by $x_{1}=1,  x_{2}=\displaystyle\frac{4d_{2}}{d_{1}+2d_{2}}$. 
\end{theorem}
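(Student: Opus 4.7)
The plan is to apply the method of Lagrange multipliers to the scalar curvature $S$ computed in Proposition 2, regarded as a function of $(x_1, x_2)$, subject to the volume constraint $V(g) = x_1^{d_1} x_2^{d_2} = 1$. A metric $g = (x_1, x_2) \in \mathcal{M}_1^G$ is Einstein if and only if $\nabla S(g) = c\, \nabla V(g)$ for some $c \in \mathbb{R}$; eliminating the multiplier $c$ by taking a ratio yields the single equation
\[
\frac{x_1}{d_1}\,\frac{\partial S}{\partial x_1} \;=\; \frac{x_2}{d_2}\,\frac{\partial S}{\partial x_2}.
\]
Both sides are homogeneous of degree $-1$ in $(x_1, x_2)$, so the equation is scale invariant; this reflects the familiar fact that if $g$ is Einstein, so is any positive rescaling of $g$. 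I therefore normalize $x_1 = 1$ and treat the Einstein condition as a single equation in the unknown $x_2$.

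Differentiating the expression $S = \tfrac{d_1}{2 x_1} + \tfrac{d_2}{2 x_2} - \tfrac{t x_2}{4 x_1^2} - \tfrac{t}{2 x_2}$ from Proposition 2 and substituting $x_1 = 1$, a routine clearing of denominators (by $4 d_1 d_2 x_2$) transforms the Einstein equation into the quadratic
\[
(d_1 + 2 d_2)\, t\, x_2^2 \;-\; 2 d_1 d_2\, x_2 \;+\; 2 d_1\,(d_2 - t) \;=\; 0, \qquad (\ast)
\]
whose only Lie-theoretic datum is the structure constant $t = [112]$. The main technical obstacle would be the direct a priori evaluation of $t$: this would entail computing the triple sum $\sum (A_{\al\be}^\gamma)^2$ of Section 1 separately for each space in Table 3, using the explicit root-space description and the Chevalley-style brackets $[E_\al, E_\be] = N_{\al,\be} E_{\al+\be}$. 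I will bypass this case-by-case calculation by an elegant shortcut.

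The shortcut is Theorem 1: since the unique $G$-invariant K\"ahler-Einstein metric on $M$ corresponds to $(x_1, x_2) = (1, 2)$, the value $x_2 = 2$ must automatically be a root of $(\ast)$. Substituting $x_2 = 2$ into $(\ast)$ and solving for $t$ immediately produces the uniform expression $t = \tfrac{d_1 d_2}{d_1 + 4 d_2}$, which is strictly positive for every space in Table 1. Applying Vieta's formulas to $(\ast)$, the product of the two roots equals $\tfrac{2 d_1 (d_2 - t)}{(d_1 + 2 d_2)\, t} = \tfrac{8 d_2}{d_1 + 2 d_2}$, so the remaining root is $x_2 = \tfrac{4 d_2}{d_1 + 2 d_2}$, exactly the non-K\"ahler value claimed in the theorem. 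This second root coincides with $x_2 = 2$ only when $d_1 = 0$, which is impossible, so the two Einstein metrics are always distinct; being the only positive solutions of a single quadratic, they constitute the complete list of $G$-invariant Einstein metrics on $M$.
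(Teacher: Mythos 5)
Your proposal is correct and follows essentially the same route as the paper: the Lagrange condition is reduced to a single scale-invariant quadratic in $x_2$ after normalizing $x_1=1$, the structure constant $t=[112]$ is determined as $t=\tfrac{d_1d_2}{d_1+4d_2}$ by substituting the K\"ahler--Einstein solution $(1,2)$ of Theorem 1, and the second root $x_2=\tfrac{4d_2}{d_1+2d_2}$ is then read off. The only cosmetic difference is that you extract the second root via Vieta's formulas where the paper factors the quadratic explicitly.
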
   
\begin{proof}
 Set $\tilde{S}=S-c(x_{1}^{d_{1}}x_{2}^{d_{2}}-1)$.  The volume condition is given by $\displaystyle\frac{\partial\tilde{S}}{\partial c}=0$. Thus a   $G$-invariant Einstein metric of volume one      is a solution  of the system
 \[
 \frac{\partial{\tilde{S}}}{\partial x_{1}} = 0 \ ,  \quad \frac{\partial{\tilde{S}}}{\partial x_{2}} = 0,
 \]
 which is equivalent to
\begin{equation}
 \begin{array}{r}
-\displaystyle\frac{d_{1}}{2x_{1}^{2}}+\displaystyle\frac{tx_{2}}{2x_{1}^{3}}-c d_{1}x_{1}^{d_{1}-1}x_{2}^{d_{2}} = 0  \\
\displaystyle\frac{t-d_{2}}{2x_{2}^{2}}-\displaystyle\frac{t}{4x_{1}^{2}}-c d_{2}x_{1}^{d_{1}}x_{2}^{d_{2}-1} = 0 
\end{array} \Bigg\} 
\end{equation} 
System (9) reduces  to the following polynomial equation
\begin{equation}
2td_{1}x_{1}^{2}-2d_{1}d_{2}x_{1}^{2}-td_{1}x_{2}^{2}+2d_{1}d_{2}x_{1}x_{2}-2td_{2}x_{2}^{2}=0.
\end{equation}

 Next, we need to find the number $t=[112]$. By Theorem 1, the space  $M=G/K$ admits a unique   K\"ahler-Einstein metric given by  $ x_{1}=1, x_{2}=2$, so substituting these values to (10) we obtain the equation
\[
2td_{1}-2d_{1}d_{2}-4td_{1}+4d_{1}d_{2}-8td_{2}=0,
\]
 from which $t=\displaystyle\frac{d_{1}d_{2}}{d_{1}+4d_{2}}$.
 We substitute this number to equation (10) and  normalize     $x_{1}=1$, to obtain the   equation
\[
 d_{1}d_{2}(x_{2}-2)\big(d_{1}x_{2}+2d_{2}(x_{2}-2)\big)=0,
\]
whose solutions   are $x_{2}=2$ and $x_{2}=\displaystyle\frac{4d_{2}}{d_{1}+2d_{2}}$.  The first solution determines the K\"ahler-Einstein metric, and the second solution determines the   non K\"ahler-Einstein metric on $M$.  
\end{proof}

\begin{example}
\textnormal{ Consider the family $C(\ell, m)$ and set $m=\ell-1$.  Then we obtain the generalized flag manifold $M=Sp(\ell)/U(1)\times Sp(\ell-1)$ which is the complex projective space $\mathbb{C}P^{2\ell-1}$. The painted Dynkin diagram  is given by
\[
\begin{picture}(100,15)(-15,-5)
\put(0, 0){\circle*{4}}
\put(0,10){\makebox(0,0){1}}
\put(2, 0){\line(1,0){14}}
\put(18, 0){\circle{4}}
\put(20, 0){\line(1,0){10}}
\put(18,10){\makebox(0,0){2}}
\put(40, 0){\makebox(0,0){$\ldots$}}
\put(50, 0){\line(1,0){10}}
\put(62.5, 0){\circle{4}}
\put(62.5, 10){\makebox(0,0){$\ell-1$}}
\put(66.5, 1){\line(1,0){14.6}}
\put(66.5, -1.1){\line(1,0){14.6}}
\put(64, -2.1){\scriptsize $<$}
\put(83, 0){\circle{4}}
\put(83.5, 10){\makebox(0,0){$\ell$}}
\end{picture}
\]
Form Table 3  we have that $d_{1}=\dim\fr{m}_1=4(\ell-1)$ and $d_2=\dim\fr{m}_2=2$.  Any $Sp(\ell)$-invariant metric $\left\langle   \ , \ \right\rangle$ on $\mathbb{C}P^{2\ell-1}$ is determined by two positive parameters $x_{1}, x_{2}$, so it is given by
\[
\left\langle   \ , \ \right\rangle=(-B)\big|_{\fr{m}_{1}}+\frac{x_2}{x_1}(-B)\big|_{\fr{m}_{2}},
\]
where $B$ is the Killing form of $Sp(\ell)$. From Theorem 2  we obtain that the (non K\"ahler) $Sp(\ell)$-invariant Einstein metric on $\mathbb{C}P^{2\ell-1}$ is given by $\left\langle \ , \ \right\rangle = 1 (-B)\big|_{\fr{m}_{1}}+\frac{2}{\ell}(-B)\big|_{\fr{m}_2}$.}
\end{example}

 Note  that the same result has also been   obtained  by W. Ziller  \cite{Zi}   by using the method of Riemannian submersions. He proved that the complex projective space $\mathbb{C}P^{2n+1}=Sp(n+1)/U(1)\times Sp(n)$ admits two $Sp(n+1)$-invariant Einstein metrics explicity given by  $\left\langle \ , \ \right\rangle=1(-B)\big|_{\fr{m}_{1}}+2p(-B)\big|_{\fr{m}_{2}}$, where $p=1$  or $p=\frac{1}{n+1}$. The   value $p=1$ gives the K\"ahler-Einstein metric, and the   value $p=\frac{1}{n+1}$  gives  the  non K\"ahler-Einstein metric.

\markboth{Andreas Arvanitoyeorgos and Ioannis Chrysikos}{Invariant Einstein metrics  on  generalized flag manifolds with two isotropy summands}
\section{Characterization of the constrained critical points of $S$}
\markboth{Andreas Arvanitoyeorgos and Ioannis Chrysikos}{Invariant Einstein metrics  on  generalized flag manifolds with two isotropy summands}
 
 We will use  a well known criterion (of  second order partial derivatives) for minima  and maxima  of smooth functions to show that both    Einstein metrics obtained  in Theorem 2 are local minima    of $S\big|_{\mathcal{M}_{1}^{G}}$. In particular,  we use the {\it bordered Hessian} $H$ of $S(g)$ restricted to the space $\mathcal{M}_{1}^{G}$ of $G$-invariant metrics with volume one.   This is the $3\times 3$ real symmetric matrix
\[
H= \left( \begin{tabular}{ccc}
$0$  & $-\displaystyle\frac{\partial V}{\partial x_1}$ & $-\displaystyle\frac{\partial V}{\partial x_2}$\\\\ 
$-\displaystyle\frac{\partial V}{\partial x_1}$ & $\displaystyle\frac{\partial^2 \tilde{S}}{\partial x_1^2}$ & $\displaystyle\frac{\partial^2\tilde{S}}{\partial x_1\partial x_2}$ \\\\ 
$-\displaystyle\frac{\partial V}{\partial x_2}$ & $\displaystyle\frac{\partial^2\tilde{S}}{\partial x_1\partial x_2}$ & $\displaystyle\frac{\partial^2\tilde{S}}{\partial x_2^2}$
\end{tabular}\right)
\]
where  $\tilde{S}=S-c(x_{1}^{d_{1}}x_{2}^{d_{2}}-1)$. The bordered Hessian  is the Hessian of  the function $S(x_1, x_2)-cV(x_1, x_2)$. 

The  critical points of $S\big|_{\mathcal{M}_{1}^{G}}$ are characterized as follows:  Let $H(g)$ be the value of $H$ at a critical point $g\in\mathcal{M}_{1}^{G}$, and let $\big|H(g)\big|$ denote  its determinant. If $\big|H(g)\big|>0$ then $g$ is a local maximum  of $S\big|_{\mathcal{M}_{1}^{G}}$, and if $\big|H(g)\big|<0$ then $g$ is a local minimum  of $S\big|_{\mathcal{M}_{1}^{G}}$. If  $ \big|H(g)\big|=0$  then   $g$ is a saddle point (cf. \cite{Ma}).

\begin{theorem}
 Let $M=G/K$ be a generalized flag manifold with $\fr{m}=\fr{m}_1\oplus\fr{m}_2$, and let $d_{i}= \dim\fr{m}_{i}$  $(i=1,2)$.  Then the $G$-invariant Einstein metrics of $M$ given in Theorem 2  are both local minima   of the scalar curvature functional $S$ restricted to the space of $G$-invariant metrics of volume one $\mathcal{M}_{1}^{G}$.
 \end{theorem}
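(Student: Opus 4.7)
The plan is to compute the bordered Hessian determinant $|H|$ explicitly at each of the two critical points found in Theorem 2, and to verify that it is strictly negative in each case. By the criterion recalled just above the theorem, this identifies each Einstein metric as a local minimum of $S|_{\mathcal{M}_{1}^{G}}$.

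First I would record the first and second partial derivatives of $V=x_{1}^{d_{1}}x_{2}^{d_{2}}$ and of $S$ (by differentiating the closed form in Proposition 2): $V_{i}=d_{i}V/x_{i}$, $V_{ij}=d_{i}(d_{j}-\delta_{ij})V/(x_{i}x_{j})$, while $S_{11}=d_{1}/x_{1}^{3}-3tx_{2}/(2x_{1}^{4})$, $S_{22}=(d_{2}-t)/x_{2}^{3}$, $S_{12}=t/(2x_{1}^{3})$. The non-border entries of $H$ are then $\tilde{S}_{ij}=S_{ij}-cV_{ij}$, where the Lagrange multiplier $c$ at a critical point is determined by equation (9) via $c=S_{1}/V_{1}=S_{2}/V_{2}$. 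Throughout, $t=d_{1}d_{2}/(d_{1}+4d_{2})$ by the proof of Theorem 2.

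Expanding along the first row, one finds $|H|=-V_{1}^{2}\tilde{S}_{22}+2V_{1}V_{2}\tilde{S}_{12}-V_{2}^{2}\tilde{S}_{11}$. Evaluating at the K\"ahler-Einstein metric $(x_{1},x_{2})=(1,2)$ and at the non-K\"ahler-Einstein metric $(1,4d_{2}/(d_{1}+2d_{2}))$, equation (9) gives $c$ explicitly in terms of $d_{1},d_{2}$, so $|H|$ becomes a rational function of $d_{1}$ and $d_{2}$. After pulling out the positive common factor $V^{2}$ together with the appropriate powers of $x_{2}$, the sign of $|H|$ at each critical point is controlled by an explicit polynomial in $d_{1},d_{2}$; the aim is to show that this polynomial is strictly negative for all admissible dimensions.

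The main obstacle is the algebraic bookkeeping. Each of $\tilde{S}_{11},\tilde{S}_{12},\tilde{S}_{22}$ is a sum of two terms coming from $S_{ij}$ and $cV_{ij}$, and $|H|$ is a quadratic form in the $V_{i}$ with these coefficients, so a naive expansion is cumbersome. The key simplification is to keep the entries in the factored form $\tilde{S}_{ij}=S_{ij}-cV_{ij}$ and to use the critical-point identity $S_{i}=cV_{i}$ to eliminate cross-contributions systematically; it is also convenient in the non-K\"ahler case to clear the denominator $d_{1}+2d_{2}$ from $x_{2}$ at the outset. In each of the two cases the resulting expression should collapse to a product of a manifestly positive rational factor and a single polynomial in $d_{1},d_{2}>0$ whose sign can be read off by inspection, yielding $|H|<0$ and hence both local minimality statements.
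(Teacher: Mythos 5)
Your proposal is correct and follows essentially the same route as the paper: compute the bordered Hessian of $\tilde{S}=S-c(V-1)$ using the second partials of $S$ from Proposition 2 and of $V=x_1^{d_1}x_2^{d_2}$, evaluate its determinant at the two critical points $(1,2)$ and $(1,4d_2/(d_1+2d_2))$ with $t=d_1d_2/(d_1+4d_2)$, and check that it is strictly negative. The only cosmetic difference is that the paper leaves the Einstein constant $c$ symbolic and concludes negativity from $c>0$ together with $d_1,d_2>0$, rather than substituting $c$ explicitly as you suggest; both give the same sign analysis.
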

\begin{proof}
   The volume of $g$ is $V=x_1^{d_1}x_2^{d_{2}}$, so
 \[
 -\displaystyle\frac{\partial V}{\partial x_1}=-d_1x_1^{d_1-1}x_{2}^{d_2}, \qquad  -\displaystyle\frac{\partial V}{\partial x_2}=-d_2x_{1}^{d_1}x_{2}^{d_2-1}.
 \]
   From equations  (9) we obtain that
  \begin{eqnarray*}
 \frac{\partial^2 \tilde{S}}{\partial x_1^2} &=& \frac{d_{1}}{x_1^3}-\frac{3tx_2}{2x_1^4}-cd_1(d_1-1)x_1^{d_1-2}x_2^{d_2},\\
\frac{\partial^2 \tilde{S}}{\partial x_2^2} &=&\frac{d_2-t}{x_2^3}-cd_2(d_2-1)x_1^{d_1}x_2^{d_2-2},\\
\frac{\partial^2\tilde{S}}{\partial x_1\partial x_2} &=& \frac{t}{2x_1^3}-cd_1d_2x_1^{d_{1-1}}x_{2}^{d_2-1},
\end{eqnarray*}
where $t=\displaystyle\frac{d_1d_2}{d_1+4d_2}$.

We first examine   the critical point $g=(1, 2)$, i.e. the K\"ahler-Einstein metric of $M$.  A computation gives that
\begin{equation}
\big|H(g)\big|=-(d_{1}+d_2)d_{1}d_{2}2^{2d_2-2}\Big(\frac{d_{2}}{d_1+4d_{2}}+c2^{d_{2}}\Big).
\end{equation}
Since the Einstein constant $c$ and the dimensions $d_{1}, d_{2}$ are  positive real numbers, it is   $\big|H(g)\big|<0$.  Thus the K\"ahler-Einstein metric   is a local minimum of   $S\big|_{\mathcal{M}_{1}^{G}}$.

For the second critical point $g=(1, \displaystyle\frac{4d_{2}}{d_{1}+2d_{2}})$, we obtain that  
\begin{equation}
\big|H(g)\big|=-d_1\big(\frac{4d_2}{d_1+2d_2}\big)^{2d_2-2}\Big(\frac{d_{1}^{3}d_{2}+5d_{1}^{2}d_{2}^{2}+6d_{1}d_{2}^{3}+2d_{2}^{4}}{(d_{1}+2d_{2})(d_{1}+4d_{2})}+ cd_{2}\big(\frac{4d_2}{d_1+2d_2}\big)^{d_2}(d_{1}+d_2)\Big),
\end{equation}
so $\big|H(g)\big|<0$,  and the non K\"ahler-Einstein metric is also a local minimum   of  $S\big|_{\mathcal{M}_{1}^{G}}$.   
\end{proof}

\begin{example}
\textnormal{Consider the space $E_{6}/SU(5)\times SU(2)\times U(1)$.  According to Table 3, it is $d_{1}=\dim\fr{m}_1=40$ and $d_{2}=\dim\fr{m}_{2}=10$, therefore $t=5$.  By Proposition 1, the scalar curvature is given by
\[
S=\frac{20}{x_{1}}+\frac{5}{2x_{2}}-\frac{5x_{2}}{4x_1^2}.
\]
From  (11) and
for the K\"ahler-Einstein metric $g=(1, 2)$  we obtain that 
\[
\big|H(g)\big|= -655360000 (1 + 8192 c)<0,
 \]
so $g$ is a local minimum of $S\big|_{\mathcal{M}_{1}^{G}}$. The non K\"ahler-Einstein metric is given by $g=(1, 2/3)$.  From (12)  it follows that
\[
\big|H(g)\big|= -11141120000/1162261467 - (53687091200000 c)/22876792454961<0, 
 \]
so it is also a local minimum   of $S\big|_{\mathcal{M}_{1}^{G}}$.}
\end{example}

 \end{document}